\theoremstyle{plain}
\newtheorem{theorem}{Theorem}
\newtheorem{lemma}{Lemma}
\newtheorem{statement}{Statement}
\newtheorem{proposition}{Proposition}
\newtheorem{consi}{Corollary}
\theoremstyle{definition}
\newtheorem{example}{Example}
\numberwithin{equation}{section}
\newcommand\starto{\smash{\raise4pt\rlap{$\mkern19.5mu\scriptscriptstyle\ast$}}\longrightarrow}
\def\({\left(}
\def\){\right)}
\def\[{\left[}
\def\]{\right]}
\def\mdeg{\operatorname{deg}}
\def\const{\operatorname{const}}
\def\GRS{\operatorname{GRS}}
\def\sZ{\mathscr Z}
\def\NN{\mathbb N}
\def\RR{\mathbb R}
\def\CC{\mathbb C}
\def\PP{\mathbb P}
\def\DD{\mathbb D}
\def\TT{\mathbb T}
\def\QQ{\mathbb Q}
\def\HH{\mathscr H}
\def\FF{\mathbf F}
\def\mybfe{\mathbf E}
\def\RS{\mathfrak R}
\def\zz{\mathbf z}
\def\mytt{\mathbf t}
\def\maa{\mathbf a}
\def\aa{\mathbf a}
\def\bb{\mathbf b}
\def\dd{\mathrm{dd}}
\let\eps\varepsilon
\let\pfi\varphi
\def\vv{V}
\def\blambda{{\boldsymbol\lambda}}
\def\bmu{{\boldsymbol\mu}}
\let\leq\leqslant\let\geq\geqslant
\let\ge\geqslant
\let\myh\widehat\let\myt\widetilde\let\myo\overline
\begin{document}

\title{Scalar equilibrium problem and the limit distribution of the zeros of Hermite--Pad\'e polynomials of type~II}

\author[Nikolay~R.~Ikonomov]{N.~R.~Ikonomov}
\address{Institute of Mathematics and Informatics, Bulgarian Academy of Sciences}
\email{nikonomov@math.bas.bg}
\author[Sergey~P.~Suetin]{S.~P.~Suetin}
\address{Steklov Mathematical Institute of Russian Academy of Sciences}
\email{suetin@mi-ras.ru}
\thanks{The research of the second author was carried out with partial
financial support of the Russian Foundation for Basic
Research (grant no.\ 18-01-00764).}

\date{19.09.2019}

\begin{abstract}
The existence of the limit distribution of the zeros of Hermite--Pad\'e polynomials of type~II
for a~pair of functions forming a~Nikishin system
is proved using the scalar equilibrium problem posed on the two-sheeted Riemann surface.

The relation of the results obtained here to some
results of H.~Stahl (1988) is discussed. Results of numerical experiments are presented.
The results of the present paper and those obtained in the earlier paper of the second author
\cite{Sue18},~\cite{Sue19},~\cite{Sue19b}
are shown to be in good accordance with both H.~Stahl's results and with results of numerical experiments.

Bibliography:~\cite{Van06}~titles. \ref{fig_4}~figures.

\bigskip
Keywords: Nikishin system,
Hermite--Pad\'e polynomials, equilibrium problem, potential theory, Riemann surfaces.
\end{abstract}

\maketitle

\markright{HERMITE-PAD\'E POLYNOMIALS}

\setcounter{tocdepth}{1}
\tableofcontents


\section{Introduction and statement of the problem}\label{s1}

\subsection{}\label{s1s1}
The present paper continues the studies initiated
by the second author in \cite{Sue18} and~\cite{Sue19b}. In these papers, a~new scalar approach to the
problem on the limit distribution of the zeros of Hermite--Pad\'e polynomials for a~pair of functions
forming a~Nikishin system was proposed and shown to be equivalent to the
traditional vector approach
(see~\cite{Sue19b}, Theorem~1).
We recall that the traditional approach to this problem is based on the solution of
a~vector equilibrium problem in potential theory with a~$2\times2$-matrix
(known as the Nikishin matrix); see,
first of all, \cite{Nik86}, \cite{NiSo88}, \cite{GoRaSo97}, and~\cite{Gon03}, and
also \cite{ApLy10}, \cite{BaGeLo18}, \cite{LoMi18},
and~\cite{LoVa18}, and the references given therein. The vector equilibrium problem
is known to have a~unique solution given by vector measure $\vec{\lambda}=(\lambda_1,\lambda_2)$. Moreover, the limit
distribution of the zeros of Hermite--Pad\'e polynomials of type~I exists and coincides with the measure $\lambda_2$,
while for Hermite--Pad\'e polynomials of type~II the limit distribution of the zeros coincides
with the measure~$\lambda_1$.
The alternative approach of~\cite{Sue18} is that
the equilibrium problem should be considered not on the Riemann sphere $\myh{\CC}$,
but rather on some two-sheeted Riemann surface.
As a~result, the equilibrium problem becomes scalar, which leads to the appearance of another
scalar equilibrium measure~$\blambda$, whose support now lies on the Riemann surface.
In~\cite{Sue18} it was shown that the limit distribution of the zeros
of Hermite--Pad\'e polynomials of type~I coincides with the measure $\lambda=\pi_2(\blambda)$,
where $\pi_2$ is the canonical projection (a~two-sheeted covering) of the Riemann surface
under consideration to the Riemann sphere. In the present paper, the scalar equilibrium measure~$\blambda$
is used to characterize the limit distribution of the zeros of
Hermite--Pad\'e polynomials of type~II for the same pair of functions $f_1$ and $f_2$ as in the papers
\cite{Sue18} and~\cite{Sue19b} (see Section~\ref{s1s2} below). As was already pointed out,
on an example of two Markov functions $f_1$ and $f_2$ considered here (see formula \eqref{1} below)
it was shown in~\cite{Sue19b} that these two approaches (the vector and scalar ones) are equivalent.
The result on the limit distribution of the zeros
of Hermite--Pad\'e polynomials of type~II (Theorem~\ref{th1} below), which is obtained here
in terms of the equilibrium measure~$\blambda$, was proved earlier by
E.~M.~Nikishin~\cite{Nik86} in a~much more general setting using the
traditional vector approach.

The idea of employing the potential theory on a~Riemann surface
for solving the problem on the limit distribution of the zeros of Hermite--Pad\'e polynomials of
multivalued analytic functions is generally not new.
This approach was proposed earlier by H.~Stahl in his two papers \cite{Sta87} and~\cite{Sta88}.
However, Stahl's approach has not been further developed.
The approach proposed by the second author in~\cite{Sue18} is different from that of
Stahl. In particular, as distinct from the second author's papers \cite{Sue18} and~\cite{Sue18b},
Stahl~\cite{Sta87}, \cite{Sta88}
has never considered any extremal problem in potential theory or
any equilibrium problem, even though he used potentials on a~compact Riemann surface. Nevertheless,
it turned out that for the pair of functions $f_1$ and~$f_2$, which is considered
both in~\cite{Sue18} and in the present paper, these two methods give the same answer in the study of the so-called
weak asymptotics of Hermite--Pad\'e polynomials of both type~I and type~II. A~relation between the results
obtained by the authors' and Stahl's methods (the ``third approach'' in Stahl's terms; see \S~9 of \cite{Sta87})
is discussed below in~\S~\ref{s3}.

So, the ultimate purpose of the present paper is to further develop and apply the new scalar approach in the
study of extremal and equilibrium problems that appear naturally when dealing with the
limit behavior of the zeros of Hermite--Pad\'e polynomials.

\subsection{}\label{s1s2}
As in \cite{Sue18} (see also~\cite{Sue19b}), we set
\begin{equation}
f_1(z):=\frac1{(z^2-1)^{1/2}},\quad
f_2(z):=\frac1\pi\int_{-1}^1\frac{h(x)}{z-x}\frac{dx}{\sqrt{1-x^2}},
\quad z\in D:=\myh{\CC}\setminus E,
\label{1}
\end{equation}
where $E:=[-1,1]$ and we choose the branch of the function $(\,\cdot\,)^{1/2}$
such that $(z^2-1)^{1/2}/z\to1$ as $z\to\infty$; for $x\in(-1,1)$ by $\sqrt{1-x^2}$
we mean the positive square root: $\sqrt{a^2}=a$ for $a\geq0$.
Here\footnote{In \S~\ref{s3}, when discussing the relation between our and Stahl's results,
we shall extend the class of function~$h$; see also~\cite{RaSu13},~\cite{Sue18b},~\cite{Sue19}.}
and everywhere in \S\S~\ref{s1} and~\ref{s2} it is assumed that in \eqref{1}\enskip $h=\myh{\sigma}$~is
a~Markov function supported in a~regular compact set $F\subset\RR\setminus E$; i.e.,
\begin{equation}
h(z)=\myh{\sigma}(z):=\int_{F}\frac{d\sigma(t)}{z-t},\quad z\in\myh\CC\setminus
F,
\label{2}
\end{equation}
where $\sigma$ is a~positive Borel measure with support $S(\sigma)$
in~$F$ and such that $S(\sigma)=F$ and $\sigma'(t):=d\sigma/dt>0$
almost everywhere (a.e.) on~$F$ (see \cite{Sue18}, \cite{GoRaSo97},
\cite{Gon03}). These conventions and notation will be kept throughout
the paper. Moreover, we shall assume that the compact set~$F$ consists
of a~finite number of closed intervals, $F=\bigsqcup_{j=1}^p F_j$, and
the convex hull $\myh{F}$ of~$F$ has no common points with the closed
interval~$E$, $\myh{F}\cap E=\varnothing$. For definiteness, we shall
assume that the compact set $\myh{F}$ lies on the real line to the
right of~$E$.

Since $f_1$ can be written as
\begin{equation}
f_1(z)=\frac1\pi\int_{-1}^1\frac1{z-x}\frac{dx}{\sqrt{1-x^2}},\quad z\in
D,
\label{3}
\end{equation}
from \eqref{1},~\eqref{2}, and~\eqref{3} it follows that
$\Delta f_2(x)/\Delta f_1(x)=\myh\sigma(x)$, $x\in(-1,1)$,
where $\Delta f_j(x)$ is the difference of the limit values (the jump)
of the function $f_j$, $j=1,2$, in the upper and lower half-planes,
respectively. It follows that the pair of functions $(f_1,f_2)$ forms a~Nikishin system (for more on such systems, see
\cite{Nik86},~\cite{NiSo88},
and also~\cite{ApBoYa17},~\cite{BaGeLo18},~\cite{LoVa18}, and the references given therein).
Note that in the paper~\cite{Sue18c} an example of a~multivalued analytic function~$f$ is given such that the
pair of functions $f,f^2$ forms a~Nikishin system (under a~minimal extension of the definition of a~Nikishin system compared to the classical one).
There exist classes\footnote{The numerical examples discussed in~\S\,\ref{s3} are related to these classes.}
of multivalued analytic functions
$f$ such that the pair of functions $f,f^2$
can be naturally considered as a~{\it complex} Nikishin system
(see \cite{RaSu13},~\cite{MaRaSu16},~\cite{Sue18b}).
In connection with the new approach of~\cite{Sue18d} to the problem of
efficient continuation of a~given germ
of a~multivalued analytic function,
this fact seems to be one of the main motivations for the study of equilibrium problems associated with complex Nikishin systems.
Moreover,
there are other applications of Hermite--Pad\'e polynomials to the study of actual problems from various fields of theoretical and applied
mathematics; see, for example, \cite{MaTs17}, \cite{Tri18}, \cite{LoMeSz19} and the references found there.
It is the field of Nikishin systems which has received most attention in recent years.
For example, much effort is now concentrated on the treatment of Hermite--Pad\'e polynomials for Nikishin systems on star-like sets
(see \cite{LoMi18}, \cite{LoLo18}, \cite{LoLo19}).
All this clearly shows the relevance of further development of the general theory
of Hermite--Pad\'e polynomials, and in the first instance, of Nikishin systems.

Given an arbitrary $n\in\NN$, we let $\PP_n$ denote the set of all polynomials of degree
$\leq{n}$ with complex coefficients; $\RR_n[\,\cdot\,]$ is the set of all
polynomials with real coefficients and of degree $\leq{n}$ with respect to the corresponding variable.
For an arbitrary polynomial $Q\in\PP_n^{*}:=\PP_n\setminus\{0\}$, by
$\chi(Q)$ we denote the counting measure of the zeros of the polynomial~$Q$ (counting multiplicities),
$$
\chi(Q):=\sum_{\zeta:Q(\zeta)=0}\delta_\zeta,
$$
where $\delta_\zeta$ is the unit measure concentrated at a~point $\zeta$ (the Dirac delta-function).

We now present some well-known facts about the limit distribution of the zeros of Hermite--Pad\'e polynomials
of type~I and~II for the pair of functions $f_1,f_2$ forming a~Nikishin system
(see \cite{NiSo88},~\cite{GoRaSo97},~\cite{Gon03},~\cite{Van06}, and
also~\cite{LoVa18},~\cite{BaGeLo18}, and the references quoted therein).

Given an arbitrary $n\in\NN$, by $q_{2n}\in\RR_{2n}[z]$,
$q_{2n}\not\equiv0$ and $p_{2n,1},p_{2n,2}\in\RR_{2n-1}[z]$ we denote
the Hermite--Pad\'e polynomials of type~II and index\footnote{More
precisely, here the multiindex $(n,n)$ is meant. But since in this
article we are actually limited only to such multiindices, we do not
require the definition of a~general multiindex.}~$n$ for a~given pair
of functions $f_1,f_2$. Namely, these polynomials are defined (not
uniquely) from the following two relations:
\begin{align}
(q_{2n}f_1-p_{2n,1})(z)=O\(\frac1{z^{n+1}}\),\quad z\to\infty,
\label{4}\\
(q_{2n}f_2-p_{2n,2})(z)=O\(\frac1{z^{n+1}}\),\quad z\to\infty.
\label{5}
\end{align}

Let $M_1(E)$ be the class of all unit (positive Borel) measures with supports
in~$E$; let $M_1(F)$ be the analogous class of unit measures with supports in~$F$. Next,
let $g_F(z,\zeta)$ be the Green function for the domain
$\Omega:=\myo\CC\setminus{F}$ with logarithmic singularity at $z=\zeta$, let
\begin{equation}
G^\mu_F(z):=\int_E g_F(z,t)\,d\mu(t),\quad z\in D,
\label{6}
\end{equation}
be the Green potential of the measure $\mu\in M_1(E)$, and let
\begin{equation}
U^\mu(z):=\int_E\log\frac1{|z-t|}\,d\mu(t),\quad
z\in D,
\label{7}
\end{equation}
be the logarithmic potential of the measure~$\mu$.
It is well known (see Chap.~5 of~\cite{NiSo88}, and
also~\cite{GoRaSo97},~\cite{Gon03},~\cite{GoRaSu11})
that there exists a~unique measure $\lambda_E\in M_1(E)$ such that
$S(\lambda_E)=E$ and
\begin{equation}
3U^{\lambda_E}(x)+G^{\lambda_E}_F(x)\equiv w_E=\const
\label{8}
\end{equation}
on~$E$ (the {\it equilibrium relation}).
The measure~$\lambda_E$ is known as the {\it equilibrium} measure with respect to the mixed
Green-logarithmic potential $3U^\mu(x)+G^\mu_F(x)$, $w_E$~is the corresponding {\it equilibrium constant}.

The following result is well known
(see \cite{Nik86},~\cite{NiSo88},~\cite{GoRaSo97},~\cite{Gon03},~\cite{ApLy10}).

\begin{theorem}\label{th0}
Let $f_1,f_2$ be the pair of functions defined in \eqref{1},
where $h(x)=\myh{\sigma}(x)$, $S(\sigma)=F$ and $\sigma'(t)=d\sigma/dt>0$ a.e.\
on~$F$. Given an arbitrary $n\in\NN$, let $q_{2n}$ be the Hermite--Pad\'e polynomials of type~II
defined by \eqref{4}--\eqref{5}. Then
$\mdeg{q_{2n}}=2n$ for each~$n$, the
polynomial $q_{2n}$ is defined uniquely by the normalization $q_{2n}(z)=z^{2n}+\dotsb$,
all the zeros of the polynomial $q_{2n}$ are simple and lie in the interval $(-1,1)$, and moreover,
\begin{equation}
\frac1n\chi(q_{2n})\overset{*}\longrightarrow2\lambda_E
\label{9}
\end{equation}
as $n\to\infty$,
where $\lambda_E\in M_1(E)$ is the equilibrium measure for problem~\eqref{8}.
\end{theorem}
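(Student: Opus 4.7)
The plan is to reduce the Hermite--Pad\'e conditions~\eqref{4}--\eqref{5} to simultaneous orthogonality of $q_{2n}$ to $\RR_{n-1}[x]$ on~$E$ with respect to two weights. Writing $f_1$ in the form~\eqref{3} and expanding $(z-x)^{-1}=\sum_{k\ge 0}x^k z^{-k-1}$ near~$\infty$, matching coefficients in~\eqref{4} gives
\begin{equation*}
\int_{-1}^{1} q_{2n}(x)\,x^k\,\frac{dx}{\pi\sqrt{1-x^2}}=0,\qquad k=0,1,\dots,n-1.
\end{equation*}
Substituting the Markov form~\eqref{2} of $h=\myh\sigma$ into~\eqref{1} and applying Fubini in~\eqref{5} yields analogously
\begin{equation*}
\int_{-1}^{1} q_{2n}(x)\,x^k\,h(x)\,\frac{dx}{\pi\sqrt{1-x^2}}=0,\qquad k=0,1,\dots,n-1.
\end{equation*}
Thus $q_{2n}\perp \RR_{n-1}[x]$ on~$E$ with respect to both $d\mu_1:=dx/(\pi\sqrt{1-x^2})$ and $d\mu_2:=h(x)\,d\mu_1$.

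Since $\myh{F}\cap E=\varnothing$, the Markov function $h=\myh\sigma$ is real-analytic and of constant sign on~$E$, so $(1,h)$ is an AT-system on~$E$: any nonzero combination $P(x)+h(x)Q(x)$ with $P,Q\in\RR_{n-1}[x]$ has at most $2n-1$ sign changes on $(-1,1)$. The standard Chebyshev-type argument built on this property (see \cite{NiSo88}, Chapter~4) combined with the pair of orthogonalities above forces $q_{2n}$ to have at least $2n$ sign changes in $(-1,1)$. Together with $\mdeg q_{2n}\leq 2n$, this yields $\mdeg q_{2n}=2n$, all zeros simple and in $(-1,1)$, and uniqueness of $q_{2n}$ under the monic normalization $q_{2n}(z)=z^{2n}+\dotsb$.

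For the limit distribution, set $\mu_n:=\frac1{2n}\chi(q_{2n})\in M_1(E)$; by weak-$*$ compactness of $M_1(E)$, it suffices to identify every subsequential limit as~$\lambda_E$. Following the classical approach of Nikishin and Gonchar--Rakhmanov--Sorokin, the second orthogonality is rewritten via~\eqref{2} as a nested orthogonality on the pair $(E,F)$, producing an auxiliary polynomial sequence whose zeros accumulate on~$F$. Asymptotic estimates of $|q_{2n}(z)|^{1/n}$ on compacta of~$D$ (and of the analogous quantity for the auxiliary polynomial) show that any accumulation point $(\mu_\infty,\nu_\infty)$ of the normalized zero counting measures of the two polynomial families solves the Nikishin vector equilibrium problem on $(E,F)$ with interaction matrix $\left(\begin{smallmatrix}2 & -1\\ -1 & 2\end{smallmatrix}\right)$; uniqueness of this vector equilibrium forces $\mu_n\overset{*}\longrightarrow\lambda_E$ and $\nu_\infty=\mathrm{Bal}_F(\lambda_E)$. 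Applying the identity $G^\mu_F=U^\mu-U^{\mathrm{Bal}_F\mu}$ in the $E$-row of the vector equilibrium relation converts it to the scalar form~\eqref{8}, the coefficient~$3$ in front of $U^{\lambda_E}$ arising from $4U^{\lambda_E}-U^{\nu_\infty}=3U^{\lambda_E}+G^{\lambda_E}_F+\const$.

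The principal obstacle is the potential-theoretic convergence step in the previous paragraph: upgrading the weak-$*$ compactness into the characterization of every limit as the unique equilibrium measure. The difficulty lies in the coupling of the two orthogonalities through $h=\myh\sigma$, which forces one to track the joint behaviour of zeros of $q_{2n}$ on~$E$ and of the auxiliary polynomial on~$F$; the regularity hypothesis $\sigma'>0$ a.e.\ on~$F$ is critical to prevent loss of mass of the limit distribution on~$F$. The required potential-theoretic estimates, the uniqueness of the vector equilibrium solution, and the balayage reduction to~\eqref{8} are all available in \cite{Nik86}, \cite{NiSo88}, \cite{GoRaSo97}, \cite{Gon03}.
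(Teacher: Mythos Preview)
Your outline is correct and follows precisely the classical vector-equilibrium route that the paper itself attributes to Nikishin and Gonchar--Rakhmanov--Sorokin. In fact the paper does \emph{not} supply a proof of Theorem~\ref{th0}: it is quoted as a known result with exactly the references you invoke (\cite{Nik86}, \cite{NiSo88}, \cite{GoRaSo97}, \cite{Gon03}, \cite{ApLy10}), and the paragraph following it says explicitly that it was obtained ``in the framework of the traditional vector approach''. So your proposal coincides with what the paper regards as the established argument; the orthogonality reduction, the AT/Chebyshev sign-change count, and the balayage identity $G^{\lambda_E}_F=U^{\lambda_E}-U^{\mathrm{Bal}_F\lambda_E}+\const$ turning the $E$-row of the Nikishin system into~\eqref{8} are all standard and correctly stated.

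Where the paper does something of its own is Theorem~\ref{th1} (with Corollary~\ref{cons1}), which re-derives the same limit distribution by a genuinely different scalar method on the two-sheeted surface~$\RS_2$. Instead of your pair of orthogonalities on~$E$ coupled to an auxiliary polynomial on~$F$ through the $2\times2$ Nikishin matrix, the paper expands $q_{2n}$ in Chebyshev polynomials, converts the second orthogonality via the second-kind functions~$H_j$ into a single orthogonality on $\FF=F^{(1)}\subset\RS_2$ (relation~\eqref{86}), and writes $q_{2n}$ explicitly through a meromorphic function~$g_n$ on~$\RS_2$ (formulas \eqref{81.2}--\eqref{82}). The $\GRS$ contradiction argument is then run once, on~$\FF$, against the scalar equilibrium~\eqref{19} (Lemma~\ref{lem1}), and the identification of the limit with $2\lambda_E$ is recovered a~posteriori from Lemma~\ref{lem2} and~\cite{Sue19b}. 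Your route is the general, well-documented one; the paper's route trades generality for an explicit strong asymptotic representation~\eqref{94} and a direct connection to Stahl's Riemann-surface picture discussed in~\S\ref{s3}.
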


In \eqref{9} and in what follows, by the convergence ``$\overset{*}\longrightarrow$'' we mean the
weak$^*$ convergence in the space of measures $M_1(E)$.


Note that the above result on the limit distribution of the zeros of Hermite--Pad\'e polynomials of type~II
is a~very particular case of a~more general result proved in a~more general setting than that considered in Theorem~\ref{th0}
(see, first of all \cite{Nik86}, \cite{NiSo88}, Ch.~5, \S~7, Theorem~7.1, and
also~\cite{GoRaSo97},~\cite{ApLy10}). All these results have been obtained in the framework of the
traditional vector approach to the problem
on the limit distribution of the zeros of Hermite--Pad\'e polynomials, the
foundations of which was laid by
A.~A.~Gonchar and E.~A.~Rakhmanov in 1981 (see~\cite{GoRa81}).

The purpose of the present paper is to prove, using the new approach proposed in the series of papers
\cite{Sue18},~\cite{Sue19b} and residing in the scalar equilibrium problem on a~Riemann surface,
that under the hypotheses of Theorem~\ref{th0} there exists the limit distribution
of the Hermite--Pad\'e polynomials $q_{2n}$ of type~II without having recourse to the vector equilibrium problem,
but rather employing directly the terms related to the scalar equilibrium problem.

\subsection{}\label{s1s3}
We shall require the following notation and definitions from \cite{Sue18}.

Given $z\in D=\myh{\CC}\setminus E$, we set
\begin{equation}
\pfi(z):=z+(z^2-1)^{1/2};
\label{11}
\end{equation}
this is the inverse of the Joukowsky function (we recall that everywhere in the present paper we
choose the branch of the function $(\cdot)^{1/2}$ such that $(z^2-1)^{1/2}/z\to1$
as $z\to\infty$). The function~$\pfi$ is single-valued meromorphic in the domain~$D$.

Let $\RS_2=\RS_2(w)$ be the Riemann surface of the function
$w^2=z^2-1$, $w(z)=\pm(z^2-1)^{1/2}$, $z\in D$.
We shall assume that a~point $\zz\in\RS_2$ has the form $\zz=(z,w)$. Let $\pi_2\colon\RS_2\to\myh\CC$ be the two-sheeted
covering of the Riemann sphere $\myh{\CC}$ ($\pi_2$~is the canonical projection):
$\pi_2(\zz)=z$.
From the equality $\pfi(\zz)=z+w$ one can define the function~$\pfi$
on the Riemann surface~$\RS_2$. More precisely, we set $\Phi(\zz)=z+w$.
The function $\Phi(\zz)$ thus defined is the natural analytic extension of the function $\pfi(z)$ from the domain
$D\subset\myh{\CC}$ to the entire Riemann surface $\RS_2$.

Let us now define the global partition of the Riemann surface $\RS_2$ into open
sheets $\RS_2^{(0)}$ (the zero sheet) and $\RS_2^{(1)}$ (the first sheet)
by the rule:
$z^{(0)}:=(z,(z^2-1)^{1/2})\in\RS_2^{(0)}$,
$z^{(1)}:=(z,-(z^2-1)^{1/2})\in\RS_2^{(1)}$, $z\in D$.
As usual, the zero sheet $\RS_2^{(0)}$ of the Riemann surface~$\RS_2$ is identified with the
``physical'' domain $D=\myh{\CC}\setminus E$ on the Riemann sphere.
We set
$g_2(\zz):=-\log|\Phi(\zz)|=\log|z-w|$.
From the above partition of the Riemann surface $\RS_2$ into sheets, we have
$\Phi(z^{(0)})=\pfi(z)$, $\Phi(z^{(1)})=1/\pfi(z)$.
Hence $g_2(z^{(0)})=-\log|z|+O(1)$,
$g_2(z^{(1)})=\log|z|+O(1)$, $z\to\infty$, and $g_2(z^{(0)})<g_2(z^{(1)})$.
The above partition of the Riemann surface $\RS_2$ into sheets is a~Nuttall partition
(see \cite{Nut84}, \S\,3, \cite{KoPaSuCh17}, Lemma~5).
Moreover, $\pi_2(\RS_2^{(0)})=\pi_2(\RS_2^{(1)})=D$.

We set $\vv(\zz):=-\log|\Phi(\zz)|=\log|z-w|$ for $\zz\in\RS_2$; in what follows, the function
$\vv(\zz)$ will play the role of an external field\footnote{More precisely, it will play the role of the potential
of an external field; note that the function $V(\zz)$ is harmonic near the compact set~$\FF$.}
in the equilibrium problem considered here.
Let $\FF=F^{(1)}\subset\RS_2$ be the compact set lying on the first sheet
$\RS^{(1)}_2$ of the Riemann surface $\RS_2$
and such that $\pi_2(\FF)=F$.

By $M_1(\FF)$ we denote the space of all
unit (positive Borel) measures with support in~$\FF$.
Following \cite{Sue18} and~\cite{Sue19b}, for an arbitrary measure $\bmu\in M_1(\FF)$,
we introduce the function $P^{\bmu}(\zz)$ of a~point $\zz\in\RS_2$ (the ``potential'' of the measure~$\bmu$,
see Remark~2 in~\cite{Sue19b}),
\begin{equation}
P^{\bmu}(\zz):=\int_{\FF}\log\frac{\left|1-1/\bigl(\Phi(\zz)\Phi(\mytt)\bigr)\right|}
{|z-t|^2}
\,d\bmu(\mytt),\quad \zz\in\RS_2\setminus(F^{(0)}\cup F^{(1)}),
\label{12}
\end{equation}
and consider the corresponding energy of the measure~$\bmu$ (cf.~\cite{Chi18} and~\cite{Chi19})
\begin{equation}
J(\bmu):=
\iint_{\FF\times\FF}\log\frac{\left|1-1/\bigl(\Phi(\zz)\Phi(\mytt)\bigr)\right|}
{|z-t|^2}\,d\bmu(\zz)\,d\bmu(\mytt)
=\int_{\FF} P^\bmu(\zz)\,d\bmu(\zz)
\label{14}
\end{equation}
with respect to the kernel
$$
\log\frac{\left|1-1/\bigl(\Phi(\zz)\Phi(\mytt)\bigr)\right|}{|z-t|^2}.
$$
We also define the energy of the measure $\bmu$ in an external field~$\vv$,
\begin{align}
J_\vv(\bmu):&=
\iint_{\FF\times\FF}\biggl\{
\log\frac{\left|1-1/\bigl(\Phi(\zz)\Phi(\mytt)\bigr)\right|}
{|z-t|^2}
+\vv(\zz)+\vv(\mytt)\biggr\}\,d\bmu(\zz)\,d\bmu(\mytt)\notag\\
&=\int_{\FF} P^\bmu(\zz)\,d\bmu(\zz)+2\int_{\FF}\vv(\zz)\,d\bmu(\zz).
\label{15}
\end{align}

By $M_1^\circ(\FF)$ we denote the
set of all measures $\bmu\in M_1(\FF)$ with finite\footnote{By~\eqref{14},
this set coincides with the set of all probability measures with support in~$F$
and having finite energy with respect to the logarithmic kernel $-\log|z-t|$.}
energy $J_V(\bmu)$. In what follows, we shall identify
the measure $\bmu\in M_1(\FF)$ with the measure $\mu=\pi_2(\bmu)\in M_1(F)$,
where $\pi_2(\bmu)(e):=\bmu(e^{(1)})$ for any $\bmu$-measurable set $e\subset F$.

The following result\footnote{Note that here
we have slightly changed the notation of~\cite{Sue18} making it more appropriate to the
scalar approach based on the use of an appropriate Riemann surface. This
approach was proposed in~\cite{Sue18} and is developed in the present paper.}
combines the principal results of the papers \cite{Sue18} and~\cite{Sue19b}.

\begin{proposition}[\rm (see \cite{Sue18}, Theorem~1;~\cite{Sue19b},
Theorem~1 and Remark~3)]\label{prop1}
In the class $M_1^\circ(\FF)$, there exists a~unique (scalar) measure $\blambda=\lambda_{\FF}\in M_1^\circ(\FF)$
with the following property:
\begin{equation}
J_\vv(\blambda)=\inf_{\bmu\in M_1(\FF)}J_\vv(\bmu).
\label{18}
\end{equation}
The measure $\blambda$ is completely characterized by the following
equilibrium condition\footnote{Here the equilibrium relation on the
compact set~$\FF$ is given in a~slightly different form than
in~\cite{Sue18}. The fact that $P^\blambda(\zz)+\vv(\zz)\equiv w_\FF$
everywhere on~$S(\lambda_\FF)$ follows from the equality
$S(\lambda_\FF)=\FF$ (see~\cite{Sue19b}) and since $\FF$~is regular.}:
\begin{equation}
P^\blambda(\zz)+\vv(\zz)\equiv w_{\FF}, \quad \zz\in \FF=S(\blambda).
\label{19}
\end{equation}
\end{proposition}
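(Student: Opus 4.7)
The plan is to follow the classical Gauss/Frostman variational scheme, adapted to the scalar equilibrium problem on the Riemann surface $\RS_2$. First I would decompose the kernel. Using the identity $|1-1/(uv)| = |1-uv|/|uv|$ together with $\vv(\zz) = -\log|\Phi(\zz)|$, one rewrites
\begin{equation*}
K(\zz,\mytt) := \log\frac{\bigl|1-1/(\Phi(\zz)\Phi(\mytt))\bigr|}{|z-t|^2} = -2\log|z-t| + \log|1-\Phi(\zz)\Phi(\mytt)| + \vv(\zz) + \vv(\mytt),
\end{equation*}
so that
\begin{equation*}
J_\vv(\bmu) = -2\iint\log|z-t|\,d\bmu\,d\bmu + \iint\log|1-\Phi(\zz)\Phi(\mytt)|\,d\bmu\,d\bmu + 4\int\vv\,d\bmu.
\end{equation*}
Since $\FF$ lies on the first sheet over $F$ with $\myh F \cap E = \varnothing$, one has $|\Phi|\le\theta<1$ on $\FF$ and $\vv$ is continuous and bounded there, so the only singularity of the integrand is the classical logarithmic one on the diagonal.

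Second, I would settle existence and uniqueness. Existence is standard: $J_\vv$ is bounded below and weak$^*$ lower-semicontinuous on the weak$^*$ compact set $M_1(\FF)$, with finite value at, e.g., the $\FF$-lift of the Robin measure of $F$; hence any minimizing sequence admits a subsequential weak$^*$ limit $\blambda$ attaining the infimum, and $J_\vv(\blambda)<\infty$ yields $\blambda\in M_1^\circ(\FF)$. Uniqueness reduces to strict positive-definiteness of the quadratic form $\nu\mapsto\iint K\,d\nu\,d\nu$ on zero-mass signed measures of finite logarithmic energy. For this I would introduce the Joukowsky variable $w=\pfi(z)=1/\Phi(\zz)$: since $z=(w+1/w)/2$ gives $z-t=(w-w')(1-1/(ww'))/2$ on $F$, and $\Phi(\zz)\Phi(\mytt)=1/(ww')$, the external-field and additive-constant pieces cancel for zero-mass $\nu$, and expanding $-\log(1-1/(ww'))$ as a convergent power series (valid since $w,w'\in\pfi(F)\subset(1,\infty)$) yields
\begin{equation*}
\iint K\,d\nu\,d\nu = -2\iint\log|w-w'|\,d\nu\,d\nu + \sum_{k\ge1}\frac{1}{k}\Big(\int w^{-k}\,d\nu\Big)^{\!2}.
\end{equation*}
Both summands are nonnegative, and the first is strictly positive for nontrivial $\nu$ of finite logarithmic energy (classical positive-definiteness of the logarithmic kernel on zero-mass signed measures); hence $J_\vv$ is strictly convex on $M_1(\FF)$ and $\blambda$ is unique.

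Third, the equilibrium characterization follows from the first-variation argument. For any $\bmu\in M_1^\circ(\FF)$ and $\nu=\bmu-\blambda$,
\begin{equation*}
0 \le \frac{d}{dt}\bigg|_{t=0^+}J_\vv(\blambda+t\nu) = 2\int_\FF\bigl[P^\blambda(\zz)+\vv(\zz)\bigr]\,d\nu(\zz),
\end{equation*}
so $P^\blambda+\vv\ge w_\FF$ quasi-everywhere on $\FF$ and equals the constant $w_\FF$ on $S(\blambda)$. Combined with $S(\blambda)=\FF$ (from \cite{Sue19b}) and regularity of $\FF$, continuity of $P^\blambda+\vv$ forces equality throughout $\FF$. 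The hard part is the strict positive-definiteness used for uniqueness: $K$ is not manifestly positive-definite in the ambient $z$-variable, because the correction $\log|1-\Phi(\zz)\Phi(\mytt)|$ enters with the ``wrong'' sign. The Joukowsky substitution is what recombines the two wrong-sign contributions $-2\log|z-t|$ and $\log|1-\Phi(\zz)\Phi(\mytt)|$ into a logarithmic energy in the $w$-variable plus a sum of squares, revealing the hidden positivity.
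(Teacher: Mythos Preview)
The paper does not actually prove Proposition~\ref{prop1}: it is imported from \cite{Sue18} (Theorem~1) and \cite{Sue19b} (Theorem~1, Remark~3), as the attribution line and the surrounding text in \S\ref{s1s3} make explicit. So there is no in-paper proof against which to compare your plan.

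That said, your argument is correct. The kernel decomposition is right, and the Joukowsky change of variable $w=\pfi(z)=1/\Phi(z^{(1)})$ is exactly identity~\eqref{82.2} restricted to the first sheet; it indeed reduces $\iint K\,d\nu\,d\nu$ for zero-mass $\nu$ to twice a logarithmic energy in the $w$-variable plus the nonnegative series $\sum_{k\ge1}k^{-1}(\int w^{-k}\,d\nu)^2$, giving strict convexity and hence uniqueness. Existence via weak$^*$ compactness and lower semicontinuity, and the Frostman first-variation step, are standard. The only place you are a bit quick is the upgrade from ``$P^\blambda+V=w_\FF$ quasi-everywhere on $S(\blambda)$'' to ``everywhere on $\FF$'': this needs continuity of $P^\blambda$ on $\FF$, which is precisely the point the paper itself defers (in the footnote) to $S(\blambda)=\FF$ from \cite{Sue19b} together with regularity of $\FF$. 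Since the paper relies on the same identity~\eqref{82.2} throughout \S\ref{s2}, your route is almost certainly close in spirit to the original proofs in \cite{Sue18} and \cite{Sue19b}; the explicit power-series display of the residual positivity is a nice touch that makes the strict convexity transparent.
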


\subsection{}\label{s1s4}
In the present paper we employ the scalar equilibrium problem~\eqref{19}
to prove the following

\begin{theorem}\label{th1}
Let $f_1$ and $f_2$ be the functions defined by relations~\eqref{1} and
let $q_{2n}$, $n\in\NN$, be the monic Hermite--Pad\'e polynomials of type~II defined by the two
relations \eqref{4}--\eqref{5}. Then $\mdeg{q_{2n}}=2n$
for all sufficiently large $n$, all the zeros of the polynomial $q_{2n}$ lie in the interval
$(-1,1)$, and moreover,
\begin{multline}
\lim_{n\to\infty}\log\frac1{|q_{2n}(z)|^{1/n}}=
U^{\lambda_F}(z)-\int_F\log\frac1{|\pfi(z)-\pfi(t)|}\,d\lambda_F(t)
-2\log|\pfi(z)|+\log{2},
\\
z\in\CC\setminus{E},
\label{37}
\end{multline}
uniformly inside the domain $\CC\setminus{E}$,
where $q_{2n}(z)=z^{2n}+\dotsb$, $\lambda_F=\pi_2(\lambda_{\FF})$, $\lambda_{\FF}$ is the equilibrium measure
for problem~\eqref{19}.
\end{theorem}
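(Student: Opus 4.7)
The plan is to reduce the multiple orthogonality satisfied by $q_{2n}$ to a single (varying) scalar orthogonality on $F$, identify the resulting limit extremal problem on $F$ with the scalar equilibrium problem \eqref{18}--\eqref{19} of Proposition~\ref{prop1} (after lifting to $\RS_2$), and finally translate the asymptotics back to $q_{2n}$ on $\CC\setminus E$.

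\emph{Steps 1--2 (multiple orthogonality and dual polynomial on $F$).} Expanding the remainders in \eqref{4}--\eqref{5} at $z=\infty$ gives, for $k=0,\ldots,n-1$,
\[\int_{-1}^1 q_{2n}(x)\,x^k\,\frac{dx}{\pi\sqrt{1-x^2}}=0,\qquad \int_{-1}^1 q_{2n}(x)\,x^k h(x)\,\frac{dx}{\pi\sqrt{1-x^2}}=0.\]
The Nikishin (Angelesco--Tchebycheff) structure of this pair of measures yields $\mdeg q_{2n}=2n$, simplicity of zeros, and their location in $(-1,1)$ by standard arguments. Substituting $h=\myh\sigma$ into the second orthogonality, applying Fubini, and cancelling polynomials via the first yield the dual orthogonality
\[\int_F R_{2n,1}(t)\,t^k\,d\sigma(t)=0,\quad 0\leq k<n,\qquad R_{2n,1}(z):=q_{2n}(z)f_1(z)-p_{2n,1}(z).\]
The polynomial $S_n(z):=q_{2n}(z)^2-(z^2-1)\,p_{2n,1}(z)^2$ factorises as $\sqrt{z^2-1}\,R_{2n,1}(z)\cdot\bigl(q_{2n}(z)+\sqrt{z^2-1}\,p_{2n,1}(z)\bigr)$, whence $\mdeg S_n=n$ and its zeros are precisely the $n$ sign changes of $R_{2n,1}$ on $F$. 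Rewriting the dual orthogonality in terms of $S_n$ produces the varying scalar orthogonality
\[\int_F S_n(t)\,t^k\,\omega_n(t)\,d\sigma(t)=0,\quad 0\leq k<n,\qquad \omega_n(t):=\frac{1}{\sqrt{t^2-1}\,\bigl(q_{2n}(t)+\sqrt{t^2-1}\,p_{2n,1}(t)\bigr)}>0\]
(positivity holds for $n$ large, since $p_{2n,1}=q_{2n}/\sqrt{z^2-1}-R_{2n,1}\to q_{2n}/\sqrt{z^2-1}$ pointwise on $F$).

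\emph{Step 3 (identification with the scalar equilibrium on $\RS_2$).} A normal-family argument yields subsequential weak limits $\tfrac1{2n}\chi(q_{2n})\starto\mu\in M_1(E)$ and $\tfrac1n\chi(S_n)\starto\nu\in M_1(F)$; let $\boldsymbol{\nu}\in M_1(\FF)$ denote the lift of $\nu$ to the first sheet. Taking $\tfrac1n\log$ of the varying weight and using $\tfrac1{2n}\log|q_{2n}(t)|\to-U^\mu(t)$ produces an effective external field for $S_n$ of the form $-U^\mu(t)$. The key algebraic step is the Joukowsky identity $2(z-t)=(\pfi(z)-\pfi(t))\bigl(1-1/(\pfi(z)\pfi(t))\bigr)$, combined with $g_E(z,t)=\log|\pfi(z)\pfi(t)-1|-\log|\pfi(z)-\pfi(t)|$ for the Green function of $D$: together they convert the integrated interaction ``$-U^\mu(t)$'' on $F$ (with $\mu$ supported on the branch cut $E$) into precisely the two-sheet kernel $\log\{|1-1/(\Phi(\zz)\Phi(\mytt))|/|z-t|^2\}$ of $P^{\boldsymbol\nu}(\zz)$ in \eqref{12}, while the $|\pfi(t)|^{2n}$-growth of the Szeg\H{o}-type combination $q_{2n}+\sqrt{z^2-1}\,p_{2n,1}$ contributes exactly the external field $\vv(\zz)=-\log|\Phi(\zz)|$ on $\FF$. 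One verifies that $\boldsymbol\nu$ satisfies $P^{\boldsymbol\nu}(\zz)+\vv(\zz)\equiv\const$ on $\FF$; uniqueness in Proposition~\ref{prop1} then forces $\boldsymbol\nu=\blambda$, hence $\nu=\lambda_F=\pi_2(\blambda)$, and the full sequence converges.

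\emph{Step 4 (recovery of $q_{2n}$; main obstacle).} From $2q_{2n}=(q_{2n}+\sqrt{z^2-1}\,p_{2n,1})+\sqrt{z^2-1}\,R_{2n,1}$ and the exponential decay of $R_{2n,1}$ on compacta in $\CC\setminus E$, one has $\tfrac1n\log|q_{2n}(z)|=\tfrac1n\log|q_{2n}(z)+\sqrt{z^2-1}\,p_{2n,1}(z)|+o(1)$ uniformly on compacta. Using the factorisation $q_{2n}+\sqrt{z^2-1}\,p_{2n,1}=S_n/(\sqrt{z^2-1}\,R_{2n,1})$, the asymptotic $\tfrac1n\chi(S_n)\starto\lambda_F$, and the Joukowsky identity to rewrite $\log|z-t|$, one assembles \eqref{37}; the constant $\log 2$ is inherited from the same identity. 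The main obstacle is Step~3 --- identifying the limit equilibrium problem on $F$ with \eqref{19} is a delicate bootstrap in which $\mu$ enters the external field of the varying orthogonality for $S_n$, whose asymptotic zero distribution must then reproduce the scalar equilibrium condition on $\FF$ in exactly the form of Proposition~\ref{prop1}; uniqueness there closes the loop.
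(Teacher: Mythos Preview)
Your overall architecture is close to the paper's --- in particular your polynomial $S_n$ coincides (up to a constant) with the paper's $P_n(z)=\prod_{j=1}^n(z-b_{n,j})$, and both proofs ultimately feed the GRS method on~$F$. But Step~3 has a genuine gap, and it is precisely the circularity you flag as the ``main obstacle''. Your varying weight
\[
\omega_n(t)=\frac{1}{\sqrt{t^2-1}\,\bigl(q_{2n}(t)+\sqrt{t^2-1}\,p_{2n,1}(t)\bigr)}
=\frac{1}{\sqrt{t^2-1}\,\bigl(2q_{2n}(t)-\sqrt{t^2-1}\,R_{2n,1}(t)\bigr)}
\]
depends on $q_{2n}$, and the assertion that the Szeg\H{o}-type combination $q_{2n}+\sqrt{z^2-1}\,p_{2n,1}$ has ``$|\pfi(t)|^{2n}$-growth'' on $F$ is \emph{not} available a~priori: for a monic polynomial with zeros in $[-1,1]$ one only has $(|t|-1)^{2n}\le|q_{2n}(t)|\le(|t|+1)^{2n}$, and the precise $n$th-root limit on $F$ is determined by the unknown measure~$\mu$. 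Consequently the effective external field you obtain is $-U^\mu$, not the explicit $V$, and what you actually arrive at is a \emph{coupled} system for $(\mu,\nu)$ --- essentially the vector equilibrium problem --- rather than the scalar condition~\eqref{19}. Closing the loop then requires either the equivalence theorem from~\cite{Sue19b} or a second equilibrium relation on~$E$ that you have not derived; either route defeats the purpose of a direct scalar proof. The same circularity recurs in Step~4, where the factorisation $q_{2n}+\sqrt{z^2-1}\,p_{2n,1}=S_n/(\sqrt{z^2-1}\,R_{2n,1})$ requires the $n$th-root asymptotics of $R_{2n,1}$ on $\CC\setminus E$, which again depends on~$\mu$.

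The paper eliminates this circularity by a device you are missing: the Chebyshev decomposition. From the first block of orthogonality one gets $q_{2n}=\sum_{j=n}^{2n}c_jT_j$; the three-term recurrence then yields $q_{2n}=q_{n,1}T_{n+m-1}+q_{n,2}T_{n+m}$ with $q_{n,1},q_{n,2}\in\RR_{m-1}[z]$ (here $n=2m-1$). Lifting to~$\RS_2$ via $g_n(\zz)=q_{n,1}(z)+q_{n,2}(z)\Phi(\zz)$ and using the explicit second-kind functions $H_j(z)=\varkappa_j/(\pfi(z)^j\sqrt{z^2-1})$ converts the second block of orthogonality into
\[
\int_{F^{(1)}}\prod_{j=1}^n\frac{t-b_{n,j}}{1-\Phi(t^{(1)})\Phi(b^{(1)}_{n,j})}\cdot \Phi(t^{(1)})^{2n+1}\pfi'(t)\,p(t)\,d\sigma(t)=0,\qquad p\in\PP_{n-1},
\]
whose weight is \emph{explicit}: it involves only the fixed field $\Phi(t^{(1)})^{2n+1}$ and the zeros $b_{n,j}$ of $P_n$ itself, with no reference to~$q_{2n}$ or~$\mu$. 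GRS then applies directly to give $\frac1n\chi(P_n)\to\lambda_F$ with no bootstrap. Moreover the identity $q_{2n}(z)=g_n(z^{(0)})\Phi(z^{(0)})^{n+m-1}+g_n(z^{(1)})\Phi(z^{(1)})^{n+m-1}$, together with the explicit product formula for $g_n$ in terms of the $b_{n,j}$, replaces your Step~4 by an exact representation from which \eqref{37} and the location of zeros in $(-1,1)$ are read off directly.
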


The following corollary can be obtained from Lemma~\ref{lem2} (see \S~\ref{s4} below) and the fact that
$\log|\pfi(z)|=g_E(z,\infty)=\gamma_E-U^{\tau_E}(z)$, where $g_E(z,\infty)$ is the Green
function for the domain~$D$, $\tau_E$~is the Chebyshev measure for the interval~$E$, and
$\gamma_E=\log2$ is the Robin constant for~$E$.

\begin{consi}\label{cons1}
Under the hypotheses of Theorem~\ref{th1},
\begin{equation}
\frac1n\chi(q_{2n})\overset{*}\longrightarrow2\mu,
\quad n\to\infty,
\label{37.2}
\end{equation}
where
\begin{equation}
\mu=\frac14\beta_E(\lambda_F)+\frac34\tau_E.
\label{38}
\end{equation}
\end{consi}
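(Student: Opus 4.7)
The plan is to deduce the weak$^*$ convergence from the pointwise potential-theoretic statement in Theorem~\ref{th1} by the standard unicity argument, combined with an algebraic rearrangement of the right-hand side of~\eqref{37} that identifies it as the logarithmic potential of the measure~$2\mu$.

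First I would recast the left-hand side of~\eqref{37} as a logarithmic potential. By Theorem~\ref{th1}, for all large~$n$ the polynomial $q_{2n}$ is monic of degree~$2n$ with all zeros in the compact set~$E$, so $\frac{1}{n}\log\frac{1}{|q_{2n}(z)|}=U^{\nu_n}(z)$ where $\nu_n:=\frac{1}{n}\chi(q_{2n})$ is a positive Borel measure on~$E$ of total mass~$2$. The theorem then says $U^{\nu_n}(z)\to R(z)$ uniformly on compacta in $\CC\setminus E$, where $R(z)$ denotes the right-hand side of~\eqref{37}. Since $\{\tfrac{1}{2}\nu_n\}\subset M_1(E)$ and~$E$ is compact, the sequence is weak$^*$-precompact; for any subsequential weak$^*$-limit $\nu_\infty$ of $\tfrac{1}{2}\nu_n$, the continuity of the kernel $-\log|z-t|$ in $t\in E$ for fixed $z\notin E$ immediately gives $U^{2\nu_\infty}(z)=R(z)$ for every $z\in\CC\setminus E$.

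Second, I would use Lemma~\ref{lem2} together with the identity $\log|\pfi(z)|=\log 2-U^{\tau_E}(z)$ cited in the corollary to rewrite $R(z)$ as $U^{2\mu}(z)$. The content of Lemma~\ref{lem2} is expected to be the identity
$U^{\lambda_F}(z)-\int_F\log\frac{1}{|\pfi(z)-\pfi(t)|}\,d\lambda_F(t)=\tfrac{1}{2}U^{\beta_E(\lambda_F)}(z)+\tfrac{1}{2}\log|\pfi(z)|+\tfrac{1}{2}\log 2$,
obtained from the Joukowsky factorization $z-t=\tfrac{1}{2}(\pfi(z)-\pfi(t))(1-1/(\pfi(z)\pfi(t)))$ together with the standard balayage formula for $\lambda_F$ relative to~$E$ and the Green-function identity $g_E(z,t)=\log|\pfi(z)\pfi(t)-1|/|\pfi(z)-\pfi(t)|$. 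Substituting this into~$R(z)$ collects the remaining $\log|\pfi(z)|$ and constant terms into $\tfrac{3}{2}(\log 2-\log|\pfi(z)|)=\tfrac{3}{2}U^{\tau_E}(z)$, giving $R(z)=\tfrac{1}{2}U^{\beta_E(\lambda_F)}(z)+\tfrac{3}{2}U^{\tau_E}(z)=U^{2\mu}(z)$ with $\mu=\tfrac{1}{4}\beta_E(\lambda_F)+\tfrac{3}{4}\tau_E$. Note that $|2\mu|=\tfrac{1}{2}|\beta_E(\lambda_F)|+\tfrac{3}{2}|\tau_E|=2$, matching $|2\nu_\infty|=2$.

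Third, by the classical unicity theorem for logarithmic potentials, two positive Borel measures on~$E$ with equal total mass and with potentials that coincide on $\CC\setminus E$ must be equal. Hence $2\nu_\infty=2\mu$, i.e.~$\nu_\infty=\mu$. Since every subsequential weak$^*$-limit of $\tfrac{1}{2}\nu_n$ equals~$\mu$, the whole sequence converges: $\tfrac{1}{n}\chi(q_{2n})\overset{*}{\longrightarrow}2\mu$, which is~\eqref{37.2}.

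The main obstacle is the algebraic content of Lemma~\ref{lem2} itself, namely transforming the mixed Joukowsky--logarithmic expression $U^{\lambda_F}(z)-\int\log(1/|\pfi(z)-\pfi(t)|)\,d\lambda_F(t)$ into a clean combination of the balayage potential $U^{\beta_E(\lambda_F)}$ and $\log|\pfi(z)|$. The remaining steps --- weak$^*$ precompactness, continuity of the kernel off~$E$, and unicity of logarithmic potentials --- are routine.
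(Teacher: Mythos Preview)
Your proposal is correct and follows essentially the same route as the paper. The paper's argument is the one-line remark that ``representation~\eqref{38} for the measure $\mu$ is obtained by applying the operator $\dd^c$ to the right-hand side of~\eqref{37} and using Lemma~\ref{lem2}''; your version unpacks the same step via weak$^*$ precompactness plus the unicity theorem for logarithmic potentials, which is the standard equivalent formulation. One small mismatch: Lemma~\ref{lem2} is actually stated for $v(z;\mu)=\int\log|1-\pfi(z)\pfi(t)|\,d\mu(t)$ (giving $v=-\tfrac12 U^{\beta_E(\mu)+\tau_E}+\const$), not directly for the combination you guessed; however, your desired identity follows from this and the Joukowsky factorization you already invoke, so nothing changes.
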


Indeed, representation~\eqref{38} for the measure $\mu$ is obtained by applying the operator
$\dd^c$ to the right-hand side of \eqref{37} and using Lemma~\ref{lem2}.

According to Theorem~1 of \cite{Sue19b}, the right-hand side of \eqref{38} is equal to
$\lambda_1=\lambda_E$ (see Theorem~\ref{th0} above).
Thus, relation~\eqref{37.2} is equivalent to~\eqref{9}.

\section{Proof of Theorem~\ref{th1}}\label{s2}

\subsection{}\label{s2s1}
Let $q_{2n}$ be the Hermite--Pad\'e polynomials of type~II defined
by \eqref{4}--\eqref{5}.
Let $\gamma_1$ be a~closed contour separating the closed interval~$E$ from the compact set~$F$
and the point at infinity $z=\infty$. From \eqref{4} it follows that the
following orthogonality relations hold:
\begin{equation}
\int_{\gamma_1}q_{2n}(\zeta)\zeta^kf_1(\zeta)\,d\zeta=0,
\quad k=0,\dots,n-1.
\label{61}
\end{equation}
Therefore, for any Chebyshev polynomial (of the first kind)
$T_k(\zeta)=\pfi^k(\zeta)+\pfi^{-k}(\zeta)=2^k\zeta^k+\dotsb$, $k=0,\dots,n-1$, we have
\begin{equation}
\int_{\gamma_1}q_{2n}(\zeta)T_k(\zeta)f_1(\zeta)\,d\zeta=0.
\label{62}
\end{equation}
Since $\Delta f_1(x)=-2i/\sqrt{1-x^2}$ for $x\in(-1,1)$, we get from \eqref{62} that
\begin{equation}
\int_{-1}^1q_{2n}(x)T_k(x)\,\frac{dx}{\sqrt{1-x^2}}=0, \quad k=0,\dots,n-1.
\label{63}
\end{equation}
Therefore, we have the representation
\begin{equation}
q_{2n}(x)=\sum_{j=n}^{2n}c_jT_j(x);
\label{64}
\end{equation}
here $c_j\in\RR$, because $q_{2n}$ is a real polynomial. Now an appeal to \eqref{5} shows that
\begin{equation}
\int_{\gamma_1}q_{2n}(\zeta)p(\zeta)f_2(\zeta)\,d\zeta=0,
\label{65}
\end{equation}
where $p\in\PP_{n-1}$ is an arbitrary polynomial. Using representation~\eqref{64}
for the polynomial $q_{2n}$, we have from \eqref{65}
\begin{equation}
\int_{\gamma_1}\biggl\{\sum_{j=n}^{2n}c_jT_j(\zeta)\biggr\}p(\zeta)f_2(\zeta)\,
d\zeta=0,
\label{66}
\end{equation}
or, equivalently,
\begin{equation*}
\int_{-1}^1
\biggl\{\sum_{j=n}^{2n}c_jT_j(x)\biggr\}p(x)\Delta f_2(x)\,dx=0.
\end{equation*}
Hence, using representation~\eqref{1} for the function $f_2(x)$, we have,
for any polynomial $p\in\PP_{n-1}$,
\begin{equation}
\int_{-1}^1
\biggl\{\sum_{j=n}^{2n}c_jT_j(x)\biggr\}p(x)\myh{\sigma}(x)
\,\frac{dx}{\sqrt{1-x^2}}=0.
\label{68}
\end{equation}

Given $z\in D$, let
\begin{equation}
H_n(z):=\frac1\pi\int_{-1}^1\frac{T_n(x)}{z-x}\frac{dx}{\sqrt{1-x^2}}
=\frac1{\pi T_n(z)}\int_{-1}^1 \frac{T^2_n(x)}{z-x}\frac{dx}{\sqrt{1-x^2}},
\quad n=0,1,\dots,
\label{69}
\end{equation}
be the functions of the second kind corresponding to the Chebyshev polynomials $T_n$. It is well known that
the Chebyshev polynomials $T_n(z)=2^nz^n+\dotsb$ satisfy the following recursion relation:
\begin{equation}
y_n=2zy_{n-1}-y_{n-2},\quad n=1,2,\dots,\quad\text{where}\quad
y_{-1}=0, \ \ y_0=1.
\label{70}
\end{equation}
From \eqref{69} it follows that this relation is also satisfied by the functions of the second kind
$H_n$ (but with different initial conditions: $H_{-1}(z)\equiv1$, $H_0(z)=f_1(z)=1/(z^2-1)^{1/2}$). Moreover, using
\eqref{69} we find that, for $x\in(-1,1)$,
\begin{equation}
\Delta H_n(x):=H_n(x+i0)-H_n(x-i0)=T_n(x)\frac{2}{i\sqrt{1-x^2}}.
\label{71}
\end{equation}
From \eqref{71} it easily follows that relation~\eqref{68} is equivalent to the relation
\begin{equation}
\int_{\gamma_1}\biggl\{\sum_{j=n}^{2n}c_jH_j(\zeta)\biggr\}p(\zeta)\myh{\sigma}(\zeta)
\,d\zeta=0,
\label{72}
\end{equation}
where $\gamma_1$ is an arbitrary contour separating the closed interval~$E$ from the point
at infinity and the compact set~$F$, besides, $p\in\PP_{n-1}$ is a~polynomial, and (see \eqref{2}),
$$
\myh{\sigma}(z)=\int_F\frac{d\sigma(t)}{z-t},\quad z\notin F.
$$
By the second equality in \eqref{69},
$$
H_j(z)=O\(\frac1{z^{j+1}}\),\quad z\to\infty,
$$
and hence \eqref{72} can be transformed to read
\begin{equation*}
\int_{\gamma_2}\biggl\{\sum_{j=n}^{2n}c_jH_j(\zeta)\biggr\}p(\zeta)\myh{\sigma}(\zeta)
\,d\zeta=0,
\end{equation*}
where $\gamma_2$ is an arbitrary closed contour separating the compact set~$F$ from the
closed interval~$E$ and the point at infinity. From the last relation we find that
\begin{equation}
\int_F
\biggl\{\sum_{j=n}^{2n}c_jH_j(t)\biggr\}p(t)\,d\sigma(t)=0
\label{73}
\end{equation}
for an arbitrary polynomial $p\in\PP_{n-1}$ and some real numbers
$c_j$, $j=n,\dots,2n$.

Now, assume for definiteness that $n=2m-1$ is an odd number (the case of even
$n=2m$ is dealt with similarly). From the recursion relation~\eqref{70} it easily follows that
\begin{equation}
\sum_{j=n}^{2n}c_jH_j(z)=q_{n,1}(z)H_{n+m-1}(z)+q_{n,2}(z)H_{n+m}(z),
\label{74}
\end{equation}
and
\begin{equation}
q_{2n}(z)=\sum_{j=n}^{2n}c_jT_j(z)=q_{n,1}(z)T_{n+m-1}(z)+q_{n,2}(z)T_{n+m}(z)
\label{75}
\end{equation}
(cf.~\cite{LoLo18}, Proposition~2.12, Lemma 5.1, and \cite{LoLo19}),
where the polynomials $q_{n,1},q_{n,2}$ lie in $\RR_{m-1}[z]$ (we recall that $n=2m-1$).
It is worth pointing out that in \eqref{74} and~\eqref{75} the polynomials $q_{n,1}$ and $q_{n,2}$ are the same.

So, relation~\eqref{73} assumes the form
\begin{equation}
\int_F
\bigl\{q_{n,1}(t)H_{n+m-1}(t)+q_{n,2}(t)H_{n+m}(t)\bigr\}p(t)\,d\sigma(t)=0
\label{77}
\end{equation}
for an arbitrary polynomial $p\in\RR_{n-1}[t]$.
Using \eqref{74}, the equality $\pfi'(z)/\pfi(z)=1/(z^2-1)^{1/2}$, and the well known fact that
for the Chebyshev polynomials the corresponding functions of the second kind read as
\begin{equation}
H_n(z)=\frac{\varkappa_n\pfi'(z)}{\pfi^{n+1}(z)}
=\frac{\varkappa_n}{\pfi^n(z)(z^2-1)^{1/2}},
\label{76}
\end{equation}
where the constant $\varkappa_n\neq0$, the orthogonality relation \eqref{77} assumes the
form
\begin{equation}
\int_F
\biggl\{q_{n,1}(t)+\frac{q_{n,2}(t)}{\pfi(t)}\biggr\}\frac{\pfi'(t)}{\pfi^{n+m}(t)}
p(t)\,d\sigma(t)=0.
\label{78}
\end{equation}

\subsection{}\label{s2s2}
We now introduce the function
$$
g_n(\zz):=q_{n,1}(z)+q_{n,2}(z)\Phi(\zz),\quad \zz\in\RS_2,
$$
and rewrite relation~\eqref{78} in the form
\begin{equation}
\int_{F^{(1)}}
g_n(t^{(1)})\Phi(t^{(1)})^{n+m}\pfi'(t)
p(t)\,d\sigma(t)=0
\label{79}
\end{equation}
or, equivalently,
\begin{equation}
\int_{F^{(1)}}
g_n(t^{(1)})\Phi(t^{(1)})^{n+m-1}\frac{p(t)}{w(t^{(1)})}\,d\sigma(t)=0.
\label{80}
\end{equation}

The function $g(\zz)$ is meromorphic on the Riemann surface
$\RS_2$ defined above.
Recall that in view of the above partition of the Riemann surface
 $\RS_2$ into sheets, we have
$\Phi(z^{(0)})=\pfi(z)$, $\Phi(z^{(1)})=1/\pfi(z)$.
Since the polynomials $q_{n,1},q_{n,2}$ lie in $\PP_{m-1}$, the function
$g_n(\zz)$ has a~pole of order $\ell_0\leq m$ at the point $\zz=\infty^{(0)}$ and has
a~pole of order $\ell_1\leq m-1$ at the point $\zz=\infty^{(1)}$. The function $g_n(\zz)$ has no other poles
on the Riemann surface $\RS_2$. From Abel's theorem it follows that the function
$g_n(\zz)$ has $\ell_0+\ell_1=\ell_2\leq2m-1=n$ ``free'' (depending on~$n$) zeros at
some points $\bb_{1,n},\dots,\bb_{n,\ell_2}$ on the Riemann surface $\RS_2$.

Since the polynomials $q_{n,1}$ and $q_{n,2}$ are real, the integrand in the orthogonality relation~\eqref{79}
is real on
$\myh{F}^{(1)}$. Since $p\in\PP_{n-1}$ is an arbitrary polynomial, from \eqref{80}
it follows that the integrand in \eqref{80}
should have at least $n$~sign changes on the compact set $\myh{F}^{(1)}$.
Therefore, the function $g_n(\zz)$ should have at least~$n$ simple zeros
on the compact set $\myh{F}^{(1)}$.
As a~result, $\ell_2=n$, $\ell_0=m$, $\ell_1=m-1$, and all the zeros
$\bb_{n,1},\dots,\bb_{n,n}$ of the function $g_n(\zz)$ lie in the compact set
$\myh{F}^{(1)}$.
Consequently, the divisor $(g_n)$ of the function $g_n(\zz)$ has the following form
(cf.\ \cite{Sue18}, formula (3.15)):
\begin{equation}
(g_n)=-m\infty^{(0)}-(m-1)\infty^{(1)}+\sum_{j=1}^{n}\bb_{n,j}.
\label{81}
\end{equation}

Note that in~\cite{Sue18} in relation~(3.15), which has the form
\eqref{81}, it was assumed in general that some of the points
$\aa_{n,j}$ can coincide with the point $\infty^{(0)}$ or with the point
$\infty^{(1)}$. In this case, in the representation like formula (3.15)
of~\cite{Sue18}, which is analogous to~\eqref{81}, terms are canceled
out. However, as was shown above, this is not the case for
Hermite--Pad\'e polynomials of type~II considered here; i.e., we
always have $\bb_{n,j}\neq\infty^{(0)},\infty^{(1)}$.

Since the function $g_n(\zz)$ is meromorphic on the Riemann surface $\RS_2$ and since the genus
of the Riemann surface $\RS_2$ is zero,
the function $g_n$ can be completely recovered from its divisor (of zeros and poles)
up to a~nontrivial multiplicative constant.
Hence (cf.\ \cite{Sue18}, formula (3.16)), we have
\begin{equation}
g_n(\zz)=C_n\prod_{j=1}^n\bigl[\Phi(\zz)-\Phi(b^{(1)}_{n,j})\bigr]
\cdot \Phi(\zz)^{-(m-1)},\quad C_n\neq0.
\label{81.2}
\end{equation}
Indeed, it can be easily seen that the divisor of the function on the right of \eqref{81.2}
coincides with the divisor \eqref{81} of the function
$g_n(\zz)$.

We have
$T_n(z)=\pfi(z)^n+\pfi(z)^{-n}=\Phi(z^{(0)})^n+\Phi(z^{(0)})^{-n}
=\Phi(z^{(0)})^n+\Phi(z^{(1)})^n$
for Chebyshev polynomials, and hence from \eqref{75} we have
\begin{equation}
q_{2n}(z)=g_n(z^{(0)})\Phi(z^{(0)})^{n+m-1}+g_n(z^{(1)})\Phi(z^{(1)})^{n+m-1}
\label{82}
\end{equation}
for Hermite--Pad\'e polynomials of type~II.

Now using the identity (see \cite{GoSu04},~\cite{Sue18})
\begin{equation}
z-a\equiv -\frac{\bigl(\Phi(\zz)-\Phi(\maa)\bigr)
\bigl(1-\Phi(\zz)\Phi(\maa)\bigr)}
{2\Phi(\zz)\Phi(\maa)},
\quad z,a\in D,
\label{82.2}
\end{equation}
employing the relation
$\pfi'(z)=\pfi(z)/(z^2-1)^{1/2}=-1/(\Phi(z^{(1)})w(z^{(1)}))$, and taking into account the convention
$n=2m-1$, we transform equality \eqref{81.2} to read
\begin{equation*}
g_n(\zz)
=\myt{C}_n\prod_{j=1}^n\frac{z-b_{n,j}}{1-\Phi(\zz)\Phi(b^{(1)}_{n,j})}
\cdot\Phi(\zz)^{m-1}
\end{equation*}
(cf.\ formula~(3.18) in~\cite{Sue18}). Therefore,
\begin{equation}
g_n(\zz)\Phi(\zz)^{n+m}
=\myt{C}_n\prod_{j=1}^n\frac{z-b_{n,j}}{1-\Phi(\zz)\Phi(b^{(1)}_{n,j})}
\cdot\Phi(\zz)^{2n+1}.
\label{82.3}
\end{equation}
Now the orthogonality relation \eqref{79} assumes the form
(cf.\ \cite{Sue18}, formula~(3.21))
\begin{equation}
\int_{F^{(1)}}\prod_{j=1}^n\frac{t-b_{n,j}}{1-\Phi(t^{(1)})\Phi(b^{(1)}_{n,j})}
\cdot \Phi(t^{(1)})^{2n+1}\pfi'(t)p(t)\,d\sigma(t)=0
\label{86}
\end{equation}
for an arbitrary polynomial $p\in\PP_{n-1}$, where all $b_{n,j}\in(c,d)$,
$[c,d]=\myh{F}\subset\RR_{+}\setminus{E}$.

Relations \eqref{78} and~\eqref{86} are quite similar to relations~(3.16)
and~(3.21) from \cite{Sue18}, respectively. The only difference is that relation (3.21)
from \cite{Sue18} gives a~formula in which $Q_{n,2}\in\RR_n[\,\cdot\,]$ (a~Hermite--Pad\'e polynomial of type~I)
is {\it fixed}, while $\aa_{n,j}$ are
{\it arbitrary} points. The other way round, relation~\eqref{86} involves
an {\it arbitrary} polynomial $p\in\PP_{n-1}$, while all the points
$\bb_{n,j}=b^{(1)}_{n,j}$ are {\it fixed}, because they correspond to the {\it fixed}
Hermite--Pad\'e polynomial of type~II $q_{2n}$ considered here. (Recall that for this polynomial
the explicit representation~\eqref{82} in terms of the function $g_n(\zz)$ is satisfied.)

\subsection{}\label{s2s3}
We set (cf.\ \cite{LoLo18}, Definition~2.11, \cite{LoLo19}, Definition~2.3)
\begin{equation}
P_n(z):=\prod_{j=1}^n(z-b_{n,j}).
\label{87.2}
\end{equation}
Exactly as in~\cite{Sue18}, one can show that there exists the limit
(as $n\to\infty$) distribution of the zeros of the polynomials $P_n$.
Namely, the following result holds.

\begin{lemma}\label{lem1}
If $n\to\infty$, then
\begin{equation}
\frac1n\chi(P_n)\overset{*}\longrightarrow\lambda_F,
\label{88}
\end{equation}
where $\lambda_F=\pi_2(\lambda_{\FF})$ and the polynomials $P_n$ are defined by representation~\eqref{87.2}.
\end{lemma}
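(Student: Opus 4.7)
The plan is to follow the strategy of~\cite{Sue18} for type~I polynomials, adapted to the non-Hermitian orthogonality~\eqref{86} satisfied by $P_n$. The argument has three steps: extracting a weak$^*$ convergent subsequence, verifying the equilibrium condition~\eqref{19} for its limit, and concluding by the uniqueness part of Proposition~\ref{prop1}.

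Since all zeros $b_{n,j}$ lie in $[c,d]=\myh F$, the sequence $\{\frac{1}{n}\chi(P_n)\}$ lies in the weak$^*$-compact set $M_1(\myh F)$. I would fix an arbitrary weak$^*$-convergent subsequence, $\frac{1}{n_k}\chi(P_{n_k})\overset{*}\longrightarrow\nu\in M_1(\myh F)$, and lift it to a measure $\boldsymbol\nu\in M_1(\FF)$ via the identification $\pi_2(\boldsymbol\nu)=\nu$. The goal then reduces to showing $\boldsymbol\nu=\lambda_{\FF}$.

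To extract the equilibrium condition from~\eqref{86}, I would unpack the modulus of the integrand via~\eqref{82.2}: the factor $\Phi(t^{(1)})^{2n+1}$ supplies the external-field contribution $-(2n+1)\vv(\mytt)$, while the product $\prod_j(t-b_{n,j})/(1-\Phi(t^{(1)})\Phi(b^{(1)}_{n,j}))$, rewritten through~\eqref{82.2} together with the identity $|\Phi(\mytt)|=e^{-\vv(\mytt)}$, produces a discretization of the kernel in~\eqref{12}. The integrand of~\eqref{86} on $\FF$ therefore has the form $\exp\bigl(-n(P^{\bmu_n}(\mytt)+2\vv(\mytt))+O(1)\bigr)$, with $\bmu_n=\frac{1}{n}\sum_j\delta_{\bb_{n,j}}$. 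Running $p$ through Lagrange-type interpolants at well-spaced nodes in $F$, the orthogonality forces the standard two-sided $n$-th-root estimates in the Gonchar--Rakhmanov style. In the limit $k\to\infty$ these yield
\begin{equation*}
P^{\boldsymbol\nu}(\zz)+\vv(\zz)\equiv w_{\FF}\quad\text{on }\FF,
\end{equation*}
so by Proposition~\ref{prop1} we conclude $\boldsymbol\nu=\lambda_{\FF}$, whence $\nu=\lambda_F$; since every weak$^*$ limit point of $\{\frac1n\chi(P_n)\}$ equals $\lambda_F$, the full sequence converges and \eqref{88} follows.

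The main technical obstacle, I expect, is the lower $n$-th-root bound on $|P_n|$: one must rule out concentration of $\bmu_n$ at a single point and check that the self-interaction factor does not create spurious zeros off $\myh F$. As in the type~I case, this is handled by the regularity of $F$ together with the Rakhmanov-type hypothesis $\sigma'>0$ a.e.\ on $F$, which via the principle of domination excludes anomalous concentration; real-analyticity of $\vv$ near $\FF$ ensures no irregular cancellations in the asymptotics. Once these estimates are in place, the proof amounts to a direct transcription to the type~II setting of the corresponding steps of~\cite{Sue18}.
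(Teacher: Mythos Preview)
Your overall strategy---extract a weak$^*$ limit of $\tfrac1n\chi(P_n)$, show it satisfies the equilibrium relation~\eqref{19}, and conclude by uniqueness in Proposition~\ref{prop1}---is exactly the $\GRS$ method the paper uses, so in outline you are on the same track. The execution differs, however, and your description of the key step is imprecise. The paper does \emph{not} run $p$ through a family of Lagrange-type interpolants; it argues by contradiction: assuming the limit $\mu\neq\lambda_F$, there is a point $t_1\in S(\mu)$ where $P^\mu+V$ exceeds its minimum $m_0$, and the single test polynomial $p=\myt P_n:=P_n/p_n$ (with $p_n$ a quadratic factor supported in a small neighborhood of $t_1$) is substituted into~\eqref{86}. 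This choice makes the integrand of constant sign on $F\setminus U_\delta(t_1)$, so the absolute value passes inside and one obtains the precise $n$-th-root asymptotics $|I_{n,1}|^{1/n}\to e^{-m_0}$ and $\varlimsup|I_{n,2}|^{1/n}\le e^{-m_1}<e^{-m_0}$, contradicting $I_{n,1}=-I_{n,2}$. Note also that it is only \emph{after} multiplying by $p\approx P_n$ that the exponent becomes $P^\mu+V$; your formula $\exp\bigl(-n(P^{\bmu_n}+2V)\bigr)$ for the bare integrand does not by itself yield the equilibrium condition with the correct external field. Finally, the paper also uses that each gap of $F$ carries at most one zero of $P_n$, so any weak limit is automatically supported on $F$ (not merely on $\myh F$)---a point your proposal skips.
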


\begin{proof}
Lemma~\ref{lem1} is proved by the $\GRS$-method (see \cite{GoRa87},
\cite{Rak18},~\cite{Sue18}).
As usual, when applying the $\GRS$-method\footnote{Note that, under the hypotheses of
Theorem~\ref{th1} and Lemma~\ref{lem1}, the
$\GRS$-method is much easier to
deal with, because the $S$-compact set $F$~is a~finite union of closed intervals of the real line and $\sigma$~is a~positive measure on~$F$; {cf}.\
\cite{GoRa87}, \cite{RaSu13}, \cite{Rak18},~\cite{Sue18}.},
we assume the contrary, i.e.,
\begin{equation}
\frac1n\chi(P_n)\not\to\lambda=\lambda_F
\label{039.2}
\end{equation}
as $n\to\infty$.
We shall arrive at a contradiction by using the orthogonality conditions \eqref{86} and
assumption \eqref{039.2}.

It is known that all the zeros of the polynomial $P_n$ lie in the closed interval $\myh{F}$.
Moreover, by the orthogonality conditions~\eqref{86}, each gap between
the closed intervals $F_j$ contains at most one zero of the polynomial $P_n$.
The weak compactness of the space of measures $M_1(\myh{F})$, $F=\bigsqcup_{j=1}^p
F_j$, shows that
\begin{equation}
\frac1n\chi(P_n)\to\mu\neq\lambda,
\quad n\in\Lambda,\quad n\to\infty,
\label{040}
\end{equation}
for some infinite subsequence $\Lambda\subset\NN$; besides, $S(\mu)\subset{F}$,
$\mu\in M_1(F)$, $\mu(1)=1$.
We claim that relation~\eqref{040} and the orthogonality condition~\eqref{86} contradict each other.

Setting
$$
\myt{U}^\mu(z):=\int_F\log\frac1{|1-\pfi(z)\pfi(t)|}\,d\mu(t),
$$
we have (see \eqref{12})
$$
P^\mu(z)=2U^\mu(z)-\myt{U}^\mu(z).
$$
Since $\mu\neq\lambda$, we have, for $z\in S(\mu)\subset F$,
\begin{equation}
P^\mu(z)+V(z)\not\equiv m_0
:=\min_{z\in F}\bigl(P^\mu(z)+V(z)\bigr)=P^\mu(t_0)+V(t_0)
\label{041}
\end{equation}
(see Proposition~\ref{prop1}),
where $t_0\in F$ and $V(z):=V(z^{(0)})$. Therefore, there exist a~point $t_1\in S(\mu)$, $t_1\neq
t_0$, and $\eps>0$ such that
\begin{equation}
P^\mu(t_1)+V(t_1)=m_1>m_0+\eps.
\label{041.2}
\end{equation}
Since the function $V(z)$ is harmonic and the potential $P^\mu(z)$ is
lower semicontinious, the same inequality \eqref{041.2} also holds in
some $\delta$-neighborhood
$U_\delta(t_1):=(t_1-\delta,t_1+\delta)\not\ni t_0$, $\delta>0$, of the
point $t_1$. We have $t_1\in S(\mu)$, and hence $\mu(U_\delta(t_1))>0$.
Therefore, for all sufficiently large $n\geq n_0$, $n\in\Lambda$, there
exists a~polynomial $p_n(z)=(z-\zeta_{n,1})(z-\zeta_{n,2})$ such that
$\zeta_{n,1},\zeta_{n,2}\in U_\delta(t_1)$ and $p_n$ divides the
polynomial $P_n$; i.e., $P_n/p_n\in\PP_{n-2}$. We set
\begin{equation}
\myt{P}_n(z):=\frac{P_n(z)}{p_n(z)}=\prod_{j=1}^{n-2}(z-b_{n,j})
\label{041.3}
\end{equation}
in representation \eqref{041.3}; moreover, in what follows we assume that the points
$b_{n,j}$ are labeled so that
$\zeta_{n,1}=b_{n,n-1}$ and $\zeta_{n,2}=b_{n,n}$.

If we put $p=\myt{P}_n$ in the orthogonality conditions~\eqref{86}, then
\eqref{86} assumes the form
\begin{align}
0&=
\int_{F\setminus{U_\delta(t_1)}}
\frac{P_n^2(t)}{p_n(t)}\prod_{j=1}^{n}\frac1{1-\pfi(t)\pfi(b_{n,j})}
\cdot\frac{\pfi'(t)}{\pfi^{n+1}(t)}\,d\sigma(t)\notag\\
&+
\int_{\myo{U}_\delta(t_1)}
\frac{P_n^2(t)}{p_n(t)}\prod_{j=1}^{n}\frac1{1-\pfi(t)\pfi(b_{n,j})}
\cdot\frac{\pfi'(t)}{\pfi^{n+1}(t)}\,d\sigma(t).
\label{042}
\end{align}
We denote by $I_{n,1}$ and
$I_{n,2}$,
respectively, the first and second integrals in \eqref{042}, and define
\begin{equation}
\prod{\vphantom{H^H}}'\frac{x-b_{n,j}}{1-\pfi(t)\pfi(b_{n,j})}
:=\prod_{j=1}^{n-2}\frac{x-b_{n,j}}{1-\pfi(t)\pfi(b_{n,j})}
\cdot\frac1{(1-\pfi(t)\pfi(b_{n,n-1}))(1-\pfi(t)\pfi(b_{n,n}))}.
\label{042.2}
\end{equation}
Since the integrand
in~$I_{n,1}$ has constant sign for $t\in F\setminus{U_\delta(t_1)}$, we have
\begin{align}
|I_{n,1}|
&=
\int_{F\setminus{U_\delta(t_1)}}\biggl|
\frac{P_n^2(t)}{p_n(t)}\prod_{j=1}^{n}\frac1{1-\pfi(t)\pfi(b_{n,j})}
\cdot\frac{\pfi'(t)}{\pfi^{n+1}(t)}
\biggr|\,d\sigma(t)\notag\\
&=\int_{F\setminus{U_\delta(t_1)}}
|P_n(t)|
\biggl|\prod{\vphantom{H^H}}'\frac{x-b_{n,j}}{1-\pfi(t)\pfi(b_{n,j})}
\biggr|
\cdot\frac{\pfi'(t)}{\pfi^{n+1}(t)}\,d\sigma(t).
\label{043}
\end{align}

A similar analysis (see Lemma~7 of \cite{GoRa87})
with the use of standard machinery of the
logarithmic potential theory shows that
\begin{equation}
\lim_{\substack{n\to\infty\\n\in\Lambda}} |I_{n,1}|^{1/n}
=\exp\biggl\{
-\min_{t\in F\setminus{U_\delta(t_1)}}\bigl(P^\mu(t)+V(t)
\bigr)\biggr\}
=e^{-m_0}.
\label{044}
\end{equation}
For completeness of presentation, we give here the proof of the limit relation~\eqref{044}
(cf.\ Lemma~7 in~\cite{GoRa87}).

Indeed, we have, as $n\to\infty$,
\begin{equation}
-\frac1n\sum_{j=1}^{n-2}\log|1-\pfi(t)\pfi(b_{n,j})|
\to\int_F\log\frac1{|1-\pfi(t)\pfi(t)|}\,d\mu(t)=\myt{U}^\mu(t)
\label{071}
\end{equation}
uniformly with respect to $t\in F$. Therefore, as $n\to\infty$
\begin{align}
\min_{t\in F}\biggl\{-\frac1n
\log\biggl(
|P_n(t)|
\biggl|\prod{\vphantom{H^H}}'\frac{x-b_{n,j}}{1-\pfi(t)\pfi(b_{n,j})}
\biggr|
\cdot\frac{\pfi'(t)}{\pfi^{n+1}(t)}
\biggr)\biggr\}
\to\min_{t\in F} \bigl\{P^\mu(t)+V(t)\bigr\}.
\label{072}
\end{align}
It follows that
\begin{equation}
\max_{t\in F}\biggl\{|P_n(t)|
\biggl|\prod{\vphantom{H^H}}'\frac{x-b_{n,j}}{1-\pfi(t)\pfi(b_{n,j})}
\biggr|
\cdot\frac{\pfi'(t)}{\pfi^{n+1}(t)}\biggr\}^{1/n}
\to\exp\bigl\{-\min_{t\in F}\bigl[P^\mu(t)+V(t)\bigr]\bigr\}
\label{073}
\end{equation}
as $n\to\infty$. Hence we have the upper estimate
\begin{equation*}
\varlimsup_{\substack{n\to\infty\\n\in\Lambda}} |I_{n,1}|^{1/n}
\leq e^{-m_0}.
\end{equation*}
Let us prove the corresponding lower estimate. The potential $P^\mu(z)$ is weakly continuous,
and hence, the function $P^\mu(z)+V(z)$
is approximately continuous with respect to the Lebesgue measure on the compact set~$F$. Consequently,
for any $\eps>0$, the set
$$
e=\{t\in F: (P^\mu+V)(t)<m_0+\eps\}
$$
has positive Lebesgue measure. From our assumptions we have
$$
-\frac1n\log
\biggl\{|P_n(t)|
\biggl|\prod{\vphantom{H^H}}'\frac{x-b_{n,j}}{1-\pfi(t)\pfi(b_{n,j})}
\biggr|
\cdot\frac{\pfi'(t)}{\pfi^{n+1}(t)}\biggr\}
\to (P^\mu+V)(t)
$$
as $n\to\infty$ with respect to the measure on~$F$. So, the measure of the set
\begin{equation*}
e_n:=\biggl\{x\in e:
-\frac1n\log
\biggl(|P_n(t)|
\biggl|\prod{\vphantom{H^H}}'\frac{x-b_{n,j}}{1-\pfi(t)\pfi(b_{n,j})}
\biggr|
\cdot\frac{\pfi'(t)}{\pfi^{n+1}(t)}\biggr)
<m_0+\eps\biggr\}
\end{equation*}
tends to the measure of~$s$ as $n\to\infty$. Therefore,
\begin{equation}
\varliminf_{\substack{n\to\infty\\n\in\Lambda}} |I_{n,1}|^{1/n}
\geq e^{-(m_0+\eps)}\lim_{\substack{n\to\infty\\n\in\Lambda}}
\(\int_{e_n}\pfi'(t)\,d\sigma(t)\)^{1/n}=e^{-(m_0+\eps)};
\label{075}
\end{equation}
the last equality in \eqref{075} holding because $\sigma'(t)>0$
a.e.\ on~$F$. The lower estimate
$$
\varliminf_{\substack{n\to\infty\\n\in\Lambda}} |I_{n,1}|^{1/n}
\geq e^{-m_0}
$$
follows from~\eqref{075}, because $\eps>0$ is arbitrary.
This proves~\eqref{044}.

On the other hand, for the second integral $I_{n,2}$ we have the estimate
\begin{equation}
|I_{n,2}|\leq
\int_{\myo{U}_\delta(t_1)}
|P_n(t)|
\biggl|\prod{\vphantom{H^H}}'\frac{x-b_{n,j}}{1-\pfi(t)\pfi(b_{n,j})}
\biggr|
\cdot\frac{\pfi'(t)}{\pfi^{n+1}(t)}\,d\sigma(t).
\label{045}
\end{equation}
Now an analysis similar to that above shows that
\begin{equation}
\varlimsup_{\substack{n\to\infty\\n\in\Lambda}} |I_{n,2}|^{1/n}
\leq\exp\biggl\{-\min_{x\in \myo{U}_\delta(t_1)}
\bigl(P^\mu(t)+V(t)\bigr)
\biggr\}
\leq e^{-m_1}<e^{-(m_0+\eps)}.
\label{046}
\end{equation}
But relations~\eqref{044} and~\eqref{046} contradict the equality $I_{n,1}=-I_{n,2}$,
which follows from the orthogonality conditions \eqref{86}.

Lemma~\ref{lem1} is proved.

\end{proof}

\subsection{}\label{s2s4}
To complete the proof of Theorem~\ref{th1} and derive
an explicit strong
asymptotic formulas for the Hermite--Pad\'e polynomials of type~II\enskip $q_{2n}$,
we employ representation \eqref{82} for these polynomials:
$q_{2n}(z)=g_n(z^{(0)})\Phi(z^{(0)})^{n+m-1}+g_n(z^{(1)}\Phi(z^{(1)})^{n+m-1}$
(we recall that $n=2m-1$ by the assumption).
In view of this representation, the zeros of the polynomial $q_{2n}$ (which total~$2n$)
can be derived from the relation
\begin{equation*}
\frac{g_n(z^{(0)})\Phi(z^{(0)})^{n+m-1}}
{g_n(z^{(1)})\Phi(z^{(1)})^{n+m-1}}=-1,
\end{equation*}
or equivalently, by using \eqref{81.2} and~\eqref{82.2},
\begin{equation}
\prod_{j=1}^n\frac{\Phi(\zz)-\Phi(b^{(1)}_{n,j})}
{1-\Phi(\zz)\Phi(b^{(1)}_{n,j})}\cdot\Phi(\zz)^{3n}=-1.
\label{92}
\end{equation}
Changing the variable to $\Phi(\zz)=\zeta$ in \eqref{92}, we get
$|\zeta|>1$ for $\zz\in\RS_2^{(0)}$ and $|\zeta|<1$ for $\zz\in\RS_2^{(1)}$. We set
$\beta_{n,j}:=\Phi(\bb_{n,j})=\Phi(b^{(1)}_{n,j})\in\DD\cap\RR$, where
$\DD:=\{z:|z|<1\}$.
Now \eqref{92} is equivalent to the equation
\begin{equation}
\prod_{j=1}^n
\frac{\zeta-\beta_{n,j}}{1-\beta_{n,j}\zeta}\cdot\zeta^{3n}=-1.
\label{93}
\end{equation}
It is easily seen that the following facts are true:

1) for any $\rho$, $1<\rho<\min(1/|\beta_{n,j}|)$,
the variation in the argument of the function on the left of \eqref{93}
along the circle of radius~$\rho$ is $4n$;

2) $\zeta=\eta$ is a~solution of equation~\eqref{93} if and only if
this equation is satisfied by $\zeta=\myo{\eta}$;

3) $\zeta=\eta$ is a solution of equation~\eqref{93} if and only if
this equation is satisfied by $\zeta=1/\eta$;

4) $\zeta=\pm1$ are not solutions of equation \eqref{93};

5) for $\zeta\in\DD$ we have
$$
\biggl|\prod_{j=1}^n\frac{\zeta-\beta_{n,j}}{1-\beta_{n,j}\zeta}
\cdot\zeta^{3n}\biggr|<1.
$$

From the above facts 1)--5) it follows that
equation~\eqref{93} has precisely
$4n$~roots (counting multiplicities) on the unit circle $\TT:=\partial\DD$.
Moreover, all the solutions of equation \eqref{93} have the form:
$\zeta_{j,\pm}= e^{\pm\theta_j}$, $j=1,\dots,2n$, $\theta_j\in(0,\pi)$.
Consequently, the polynomial
$q_{2n}$ has precisely $2n$ zeros in the interval $(-1,1)$. Therefore, $\mdeg{q_{2n}}=2n$.

Now, we have $|\Phi(z^{(1)})|<1<|\Phi(z^{(0)})|$, and hence from \eqref{82}
and~\eqref{82.2} we get
\begin{equation}
q_{2n}(z)=g_n(z^{(0)})\Phi(z^{(0)})^{n+m-1}\bigl\{1+\delta_n(z)\bigr\}
\label{94}
\end{equation}
for $z\in\DD$ as $n\to\infty$, where $\delta_n(z)$ reads as
$$
\delta_n(z):=\frac{g_n(z^{(1)})\Phi(z^{(1)})^{n+m-1}}
{g_n(z^{(0)})\Phi(z^{(0)})^{n+m-1}}
=\prod_{j=1}^n\frac{\Phi(z^{(1)})-\Phi(b^{(1)}_{n,j})}
{1-\Phi(z^{(1)})\Phi(b^{(1)}_{n,j})}\cdot\Phi(z^{(1)})^{3n}.
$$
Since $\Phi(z^{(1)}),\Phi(b^{(1)}_{n,j})\in\DD$, we have
$$
\biggl|\prod_{j=1}^n\frac{\Phi(z^{(1)})-\Phi(b^{(1)}_{n,j})}
{1-\Phi(z^{(1)})\Phi(b^{(1)}_{n,j})}\biggr|<1
$$
and hence, $\delta_n(z)$ as $n\to\infty$
converges geometrically to zero uniformly (on compact subsets) inside the domain $D=\myo{\CC}\setminus{E}$.

Now using representation~\eqref{82.3}, we get from \eqref{94},
\begin{align}
-\frac1n\log|q_{2n}(z)|
\to U^{\lambda_F}(z)&-\int_F\log\frac1{|\pfi(z)-\pfi(t)|}\,d\lambda_F(t)
-2\log|\pfi(t)|+\log{2}
\notag\\
=U^{\lambda_F}&-G^{\lambda_F}_E(z)
+\int_F\log|1-\pfi(z)\pfi(t)|\,d\lambda_F(t)\notag\\
&-2\log|\pfi(z)|+\log{2},
\label{95}
\end{align}
uniformly inside the domain $D$, where
$$
G^{\lambda_F}_E(z)=\int_F\log\biggl|\frac{1-\pfi(z)\pfi(t)}{\pfi(z)-\pfi(t)}\biggr|
\,d\lambda_F(t)
$$
is the Green potential (with respect to the domain~$D$) of the measure $\lambda_F\in M_1(F)$
(see Appendix, \S~\ref{s4}, Lemma~\ref{lem2}).

A direct consequence of \eqref{95} and Lemma~\ref{lem2} is that the limit distribution of the zeros of
the polynomials $q_{2n}$ does exist, the equality
\begin{equation}
\mu=\frac14\beta_E(\lambda_F)+\frac34\tau_E
\label{96}
\end{equation}
holding for the corresponding limit measure $\mu\in M_1(E)$.

\section{Relation to Stahl's results. Some concluding remarks}\label{s3}

\subsection{}\label{s3s1}
In this section, we shall use the notation $\Delta:=[-1,1]$.
It what follows it will be assumed that $f_1=f$, $f_2=f^2$ in definition \eqref{4}--\eqref{5}
for Hermite--Pad\'e polynomials of type~II, where $f\in\sZ(\Delta)$
(see \eqref{701}).

For an arbitrary $n\in\NN$, the
Hermite--Pad\'e polynomials of type~II associated with the collection of
functions $[1,f,f^2]$ are defined in a~standard way from the relation
\begin{equation}
R_n(z):=(Q_{n,0}+Q_{n,1}f+Q_{n,2}f^2)(z)=O\(\frac1{z^{2n+2}}\),
\quad z\to\infty.
\label{709}
\end{equation}

Following Stahl~\cite{Sta88} (see also~\cite{BaGr96}, \S\,8.6) we shall use the following
definitions and notation, which are adapted appropriately to those introduced above.

Let $\RS$ be a Riemann surface and let $\pi\colon\RS\to\myh{\CC}$ be the
canonical projection. By this, the Riemann surface
$\RS$ is defined as a~multisheeted covering of the Riemann sphere; in what follow, we shall assume that
this covering is infinitely-\allowbreak sheeted.
Thus, the set $\pi^{-1}(z)$, $z\in\myh{\CC}\setminus\Sigma$, consists of a~finite number of points lying
on the Riemann surface~$\RS$ ``above'' the point~$z$. Here $\Sigma\subset\CC$ is
the finite (by the assumption) set of critical values of the projection~$\pi$.
So, for all points $z\in\myh{\CC}$ (except for the finite set~$\Sigma$) the mapping~$\pi$ is
locally biholomorphic. Since the covering~$\pi$ is infinitely-sheeted, it follows that the covering multiplicity condition
(as imposed by Stahl in~\cite{Sta88}) is always satisfied.
Consequently, we are thus able to compare the results obtained heuristically by Stahl
in~\cite{Sta88} with our results from \cite{Sue18},~\cite{Sue19b}, and from the present paper.
It what follows it will be assumed that $\infty\notin\Sigma$.

The functions
\begin{equation}
f(z):=\prod_{j=1}^p\(A_j-\frac1{\pfi(z)}\)^{\alpha_j},\quad z\in
D:=\myh\CC\setminus \Delta,
\label{701}
\end{equation}
provide an illustrative example of multivalued analytic functions;
here $p\geq2$, all $A_j\in\CC$ are pairwise distinct, $|A_j|>1$,
$\alpha_j\in\CC\setminus\QQ$, $j=1,\dots,p$, and $\alpha_1+\dots+\alpha_p=0$,
and, as before, we choose the branch of the function $(\cdot)^{1/2}$ such that
$(z^2-1)^{1/2}/z\to1$ as $z\to\infty$.
Following~\cite{Sue18c},~\cite{Sue19}, we denote by $\sZ(\Delta)$
the class of functions of the form~\eqref{701}.
The function $f\in\sZ(\Delta)$ is a~multivalued analytic function with the branch set
$\Sigma=\Sigma(f)=\{\pm1,a_j,j=1,\dots,p\}$, where $a_j:=(A_j+1/A_j)/2$. Moreover,
for the branch of $f(z)$, $z\in D$, of this function, as given by representation~\eqref{701},
we have $f(\infty)=1$.
The germ $f\in\HH(\infty)$ of a~function $f\in\sZ(\Delta)$ extends holomorphically to the domain~$D$ and
the closed interval $\Delta=S$ is the Stahl compact set for $f\in\HH(\infty)$. The corresponding
two-sheeted Stahl surface $\RS_2$ associated with~$f$ is the Riemann surface
$\RS_2=\RS_2(w)$ of the function $w^2=z^2-1$. A~point~$\zz$ on the Riemann surface $\RS_2(w)$ is the pair
$(z,w)$, where $w=\pm(z^2-1)^{1/2}$. This case corresponds to Pad\'e polynomials, and to the parameter $m=1$
in Stahl's notation of~\cite{Sta88}.

It is clear that the Riemann surface of an arbitrary function $f$ from the class $\sZ(\Delta)$
satisfies all the properties mentioned above; in particular, $\infty\notin\Sigma$.
Using this class as an example, we shall compare our results with those of Stahl~\cite{Sta88}.
In particular, below we shall discuss some numerical examples of distribution of the zeros of Hermite--Pad\'e polynomials
for functions of the form~\eqref{701}.

By $\infty^{(0)}$ we shall denote the point of the set $\pi^{-1}(\infty)\subset\RS$
at which the function~$f$ (which was initially defined by representation~\eqref{701}) assumes the value~$1$.
Following a~common tradition, we shall identify this point
with the point~$\infty$ on the Riemann sphere~$\myh\CC$.

\subsection{}\label{s3s2}
The existence of the limit distribution of the zeros of the Pad\'e polynomials $P_{n,j}$, $j=1,2$,
for an arbitrary function~$f$ from the class $\sZ(\Delta)$ is provided
by Stahl's theorem.
Moreover,
$$
\frac1n\chi(P_{n,j})\overset{*}\longrightarrow d\tau_\Delta(x)
:=\frac1\pi\frac{dx}{\sqrt{1-x^2}},
\quad n\to\infty.
$$

Following Stahl \cite{Sta88}, we set
\begin{equation}
H(z,x):=
\begin{cases}
z-x, & \ |x|\leq1,\\
(z-x)/|x|, & \ |x|>1,\\
1,& \ x=\infty.
\end{cases}
\label{702}
\end{equation}
For an arbitrary measure $\mu$, $S(\mu)\subset\CC$, following
formula (3.1) of~\cite{Sta88}, we define the spherically normalized\footnote{Note that in~\cite{Chi18}
the spherical normalization of a~potential is defined differently.}
potential:
\begin{equation}
p(\mu;z):=\int\log|H(z,x)|\,d\mu(x).
\label{703}
\end{equation}
By $\mathscr D$ we denote the class of all domains~$\mathfrak D$
on the Riemann surface $\RS=\RS(f)$ of the function $f\in\sZ(\Delta)$, ${\mathfrak D}\subset\RS$, such that
$\infty^{(0)}\in {\mathfrak D}$ and there exists the Green function $g_{\mathfrak D}(\zz,\mytt)$ for the domain~${\mathfrak D}$
(it is therefore assumed that the boundary $\partial {\mathfrak D}$ of the domain ${\mathfrak D}$ is a~nonpolar set; see
\cite{Chi18},~\cite{Chi19}).
In what follows, we assume that $g_{\mathfrak D}(\zz,\mytt)\equiv0$ for $\zz\in {\mathfrak D}$ for $\mytt\in\RS\setminus{{\mathfrak D}}$.

Now let $\nu$ be a~signed measure on the Riemann surface $\RS$ with finite total mass
(see \cite{Chi18}) and $\mathfrak D\in\mathscr D$. We define the Green
potential of the signed measure~$\nu$ as follows (see \cite{Chi18},~\cite{Chi19}):
\begin{equation}
G_{\mathfrak D}(\nu;\zz):=\int g_{\mathfrak D}(\zz,\mytt)\,d\nu(\mytt).
\label{704}
\end{equation}
Following~\cite{Sta88}, given a~point $\zz\in\RS$ and a~measure~$\mu$ on~$\myh{\CC}$, we set
\begin{equation}
p_\mu(\zz):=p(\nu;\pi(\zz)),\quad z\in\RS.
\label{705}
\end{equation}
So, $p_\mu$ is the lifting of the potential $p(\mu;z)$ of the measure~$\mu$ to the Riemann surface~$\RS$.
For a~fixed arbitrary integer number $m\ge1$, the following two
functions of a~point $\zz\in\RS$ were introduced in~\cite{Sta88}:
\begin{align}
r(\zz)=r(\mu,\mathfrak D;\zz):&=G_{\mathfrak
D}((m+1)\delta_{\infty^{(0)}}-\pi^{-1}(\mu));z)
\notag\\
&=(m+1)g_{\mathfrak D}(\zz,\infty^{(0)})
-\int g_{\mathfrak D}(\zz,\mytt)\,d\mu(\pi(\mytt)),
\label{706}\\
d(\zz):&=r(\zz)-p_\mu(\zz).
\end{align}
In~\cite{Sta88} it is noted that the case $m=1$ corresponds to Pad\'e polynomials, and hence
this case is classical. Therefore, the first nontrivial case, which
appears for $m=2$, corresponds to Hermite--Pad\'e polynomials of types~I and~II.
Note that in~\cite{Sue18}, for $m=3$, new Hermite--Pad\'e polynomials (intermediate between the
Hermite--Pad\'e polynomials of types~I and~II) were introduced.
In the present paper, we are concerned only with the case $m=2$, because we are dealing
here only with a~pair of functions, rather than with a~triple of functions, as
in~\cite{Sue18d}.
That is, we discuss the features
of the first nontrivial step (after the Pad\'e polynomials)
in the study of the asymptotic properties of Hermite--Pad\'e polynomials. In
this case, representation~\eqref{706} for the function $r(\zz)$ assumes the form
\begin{equation}
r(\zz)=3g_{\mathfrak D}(\zz,\infty^{(0)})
-\int g_{\mathfrak D}(\zz,\mytt)\,d\mu(\pi(\mytt)).
\label{708}
\end{equation}
The function $r(\zz)$ thus defined has the following properties:

\begin{enumerate}
\item
$r(\zz)=0$ quasi everywhere on $\partial\mathfrak D$ (we recall that the set $\partial\mathfrak D$ is nonpolar);
\item
$r(\zz)=3\log|\pi(\zz)|+O(1)$ as $\zz\to\infty^{(0)}$;
\item
the function $r(\zz)-\log|\pi(\zz)|$ is harmonic in the domain ${\mathfrak D}\setminus\infty^{(0)}$.
\end{enumerate}

The first Stahl result (\cite{Sta88}, Theorem~3.2) in this direction is as follows.

\begin{statement}\label{stat1}
Let $f\in\sZ(\Delta)$. Then there exist a~unique domain
${\mathfrak D}_1\in\mathscr D$ and a~unique measure $\nu_1$,
$S(\nu_1)\subset\myh{\CC}$, such that the function $r(\zz)=r(\nu_1,{\mathfrak D}_1;\zz)$ can be harmonically extended from
the domain~$\mathfrak D_1$ to some neighborhood of~$\partial {\mathfrak D}_1$.
\end{statement}

The domain ${\mathfrak D}_1\in\mathscr D$ and the measure $\nu_1\in
M_1(\myh{\CC})$ are called by Stahl the convergence domain of type~I
and the asymptotic distribution of type~I, respectively (see
\cite{Sta88}, Definition 3.3). Moreover, Stahl claims that the limit
distribution of the zeros of Hermite--Pad\'e polynomials of type~I does
exist and coincides with the measure $\nu_1$ (see Theorem~4.3
in~\cite{Sta88}):
\begin{equation}
\frac1n\chi(Q_{n,j})\overset{*}\longrightarrow \nu_1,
\quad j=0,1,2,\quad n\to\infty.
\label{710-1}
\end{equation}
In the domain $D_1:=\pi({\mathfrak D}_1)$, the limit relation holds
\begin{equation}
\lim_{n\to\infty}|Q^{*}_{n,j}(z)|^{1/n}=e^{p(\nu_1;z)},
\quad z\in\pi({\mathfrak D}_1),\quad j=0,1,2,
\label{711}
\end{equation}
where $Q^{*}_{n,j}$ are the spherically normalized\footnote{The spherical normalization of polynomials
(see, for example, \cite{GoRa87}) is quite similar to that of potentials~\eqref{702}.}
polynomials
($\mdeg{Q^{*}_{n,j}}/n\to1$) and the convergence
in \eqref{711} is understood as the
convergence in capacity on compact subsets of the domain
$\pi({\mathfrak D}_1)$.

We now set
\begin{equation}
\mathfrak D_0:=\bigl\{\zz\in \mathfrak D_1: r(\zz)>r(\mytt)\ \text{for all}\
\mytt\in \mathfrak D_1\ \text{such that}\ \pi(\mytt)=\pi(\zz),\ \mytt\neq\zz\bigr\}.
\label{710-2}
\end{equation}
Hence (see \cite{Sta88}) the set $\mathfrak D_0$ is a~subdomain of~$\mathfrak
D_1$. Since $r(\zz)=3\log|\pi(\zz)|+O(1)$ as $\zz\to\infty^{(0)}$ and since
$r(\zz)-\log|\pi(\zz)|$ is a~harmonic function in
$\mathfrak D_1\setminus\{\infty^{(0)}\}$,
we have $\infty^{(0)}\in \mathfrak D_0$.
The set $\pi(\mathfrak D_0)$ is a~domain on
$\myh{\CC}$ such that $\infty^{(0)}\in \pi(\mathfrak D_0)$ and
the set $\myh{\CC}\setminus\pi(\mathfrak D_0)$ consists of a~finite number of analytic arcs.
Let $\partial/\partial n$ be the
derivative with respect to the inner normal on $\partial\mathfrak D_0$. We set
\begin{equation}
d\mu_0(\zz):=\frac1{2\pi}\frac{\partial}{\partial n}
r(\nu_1,\mathfrak D_1;\zz)\,ds_{\zz},\quad \zz\in\partial \mathfrak D_0,
\label{711-1}
\end{equation}
where $ds_{\zz}$ is the element of arc length corresponding to a~point $\zz\in\partial
\mathfrak D_0$. Since the function $r(\zz)$ (see~\eqref{708}) has a~logarithmic pole of order~3
at the point $\zz=\infty^{(0)}$ and is harmonic
in $\mathfrak D_0\setminus\{\infty^{(0)}\}$, it follows that $\mu_0$ is a~positive measure supported
in $\partial \mathfrak D_0$ and $\mu_0(\partial \mathfrak D_0)=3$.
We set
$$
\nu_2=\frac13\pi(\mu_0).
$$
According to Stahl, the domain $D_2:=\pi(\mathfrak D_0)$ and the measure $\nu_2$ are called the convergence domain of type~II
and the asymptotic distribution of type~II, respectively
(see \cite{Sta88}, Definition~3.6). Another Stahl's result
(see \cite{Sta88}, Theorem~4.5) is to the effect that the limit distribution of the zeros
of Hermite--Pad\'e polynomials of type~II does exist and coincides with the measure $\nu_2$.
Namely, the following result holds.

\begin{statement}\label{stat2}
Under the hypotheses of Statement~\ref{stat1},
\begin{equation}
\frac1{2n}\chi(q_{2n})\overset{*}\longrightarrow\nu_2,
\quad n\to\infty.
\label{713}
\end{equation}
Moreover,
\begin{equation}
\lim_{n\to\infty}|q_{2n}^{*}(z)|^{n}=e^{p(\nu_2;z)},
\quad z\in\pi(\mathfrak D_0);
\label{714}
\end{equation}
the convergence in \eqref{714} is understood as the
convergence in capacity on compact subsets of
the domain $\pi(\mathfrak D_0)$, $q_{2n}^{*}$ is the spherically normalized
Hermite--Pad\'e polynomial of type~II.
\end{statement}

From Stahl's point of view~\cite{Sta88}, the domain $\mathfrak D_0$ is the zero\footnote{According
to an established tradition, the numbering of sheets of a~Riemann surface
starts from the zero (open) sheet, which as a~rule is identified with the
``physical'' plane~$\myh{\CC}$.} sheet of the Riemann surface $\RS(f)$,
and the open set $\mathfrak D_1\setminus\myo{\mathfrak D_0}$ is its first sheet.

\subsection{}\label{s3s3}
We now show that the above Stahl's results~\cite{Sta88}, the results of
\cite{Sue18}, \cite{Sue19}, \cite{Sue19b}, and the results of the present paper
compare favorably with each other.

As in~\cite{Sue19}, we assume that in representation \eqref{701} the quantities
$A_j$ and $\alpha_j$, $j=1,\dots,p$, are chosen so that, for each $j=1,\dots,p$,
we have $A_j=\myo{A}_k$ for some $k\in\{1,\dots,p\}$, the corresponding exponents
being equal: $\alpha_j=\alpha_k\in\RR\setminus\QQ$.
It is clear that with these conditions on the parameters $A_j$ and $\alpha_j$ the function $f$ is still
an infinite-valued function. Hence the conclusions of Theorems~1 and~2 hold (these results were announced
in~\cite{Sue19}; see also~\cite{RaSu13}).
According to Theorem~2 of~\cite{Sue19}, there exists a~compact set $\FF\subset\RS(f)$ such that
the component of the set $\RS(f)\setminus\FF$ which contains the point $\zz=\infty^{(0)}$
is a~two-sheeted domain over~$\myo\CC$, whose boundary
coincides with the compact set~$\FF$ and consists of a~finite number of analytic arcs.
We denote this domain by $\mathfrak G_1$, $\partial \mathfrak G_1=\FF$.

Let, as before (see \S\,\ref{s1s3}),
$V(\zz)=\log|z-w|=-\log|\Phi(\zz)|$, $\zz\in \mathfrak G_1$,
$$
P^{\lambda_{\FF}}(\zz)=
\int_{\FF}\log\frac{\left|1-1/\bigl(\Phi(\zz)\Phi(\mytt)\bigr)\right|}
{|z-t|^2}\,d\lambda_{\FF}(\mytt),\quad
\zz\in \mathfrak G_1\setminus(F^{(0)}),
$$
where $\lambda_{\FF}\in M_1(\FF)$ is the equilibrium measure for
$P^{\lambda_\FF}(\zz)+V(\zz)$ (this is the potential of an external field; i.e.,
$P^{\lambda_\FF}(\zz)+V(\zz)\equiv w_\FF=\const$ for $\zz\in\FF$).

We set $u(\zz):=P^{\lambda_\FF}(\zz)+V(\zz)-w_\FF$, $\zz\in \mathfrak G_0$.
The function $u(\zz)$ thus defined has the following properties (see \cite{Sue19},~\cite{Sue19b}):
\begin{enumerate}
\item
$u(\zz)\equiv0$ for $\zz\in\partial \mathfrak G_0=\FF$;
\item
$\dfrac{\partial u}{\partial n^{+}}(\zz)
=\dfrac{\partial u}{\partial n^{-}}(\zz)$, $\zz\in\FF\setminus{e}$,
$\#{e}<\infty$,
where $\partial/\partial n^{\pm}$ are the normal derivatives at a~point $\zz\in\FF$
taken from the both sides of~$\FF$;
\item $\displaystyle
-\frac1{2\pi}dd^cu=\lambda_\FF
+\lambda_F-3\delta_{\infty^{(0)}};$
\item
$u(\zz)=-3\log|z|+O(1)$ as $\zz\to\infty^{(0)}$.
\end{enumerate}
It follows that
$$
u(\zz)=-r(\lambda_\FF,\mathfrak G_1;\zz)=-r(z),
$$
and hence $\lambda_{\pi(\FF)}=\nu_1$ and $\mathfrak G_1=\mathfrak D_1$.

Let us now consider the set
$$
\mathfrak G_0:=\{\zz\in \mathfrak G_1: u(\zz)<u(\mytt),\mytt\in \mathfrak G_1,\pi(\mytt)=\pi(\zz),\mytt\neq\zz\}.
$$
According to \S~2.1 of \cite{Sue19b} and from the above properties of the function $u(\zz)$
it follows that $\mathfrak G_0$ is a~one-sheeted (with respect to the projection~$\pi$) subdomain of the domain
$\mathfrak G_1$, containing the point $\zz=\infty^{(0)}$ and whose boundary is
$\pi^{-1}(\Delta)\cap\mathfrak G_1$. In other words,
$\mathfrak G_0=\{\zz=(z,(z^2-1)^{1/2}),z\in D=\myh{\CC}\setminus{\Delta}\}$.
We have $u(\zz)=-r(\zz)$, and so $\mathfrak G_0=\mathfrak D_0$. It now easily follows that
the measure $\nu_2$, as introduced by Stahl~\cite{Sta88}, coincides with the measure~\eqref{38}.

So, the results obtained in~\cite{Sue18},~\cite{Sue19},~\cite{Sue19b} and in the present paper
are quite in agreement
with those obtained by Stahl in~\cite{Sta88}.

\subsection{}\label{s3s4}
Finally, we conclude by discussing in what direction a~further
development of the theory of Hermite--Pad\'e polynomials associated
with multivalued analytic functions is possible. Let us emphasize once
again that Stahl's results of~\cite{Sta88} are heuristic and require
rigorous justification. Hence they should be considered as conjectures
in the same way as those proposed by Nuttall in~\cite{Nut84}.

The advantage of the scalar approach over the vector one
can be visualized as follows.
As in the case of Pad\'e polynomials, here we prove the existence of a~single extremal compact set
(but which now lies on the Riemann surface); this can also be clearly seen from numerical results
on evaluation of zeros of Hermite--Pad\'e polynomials of type~I and~II for functions of the form~\eqref{701} and
for slightly more general functions. Of course, in Examples~\ref{ex1}--\ref{ex3} that follow
all $\alpha_j$ in representation \eqref{701} are from~$\QQ$.

\begin{example}\label{ex1} In this example, $p=3$, $A_1\in\RR$,
$A_2=\myo{A}_3\notin\RR$, $\alpha_1=-2/3$, $\alpha_2=\alpha_3=1/3$ in representation \eqref{701}.
In Fig.~\ref{fig_1}, the dark (blue, red, and black) points are the zeros of
the Hermite--Pad\'e polynomial of type~I $Q_{n,0}$, $Q_{n,1}$ and $Q_{n,2}$, which simulate
the $S$-compact set $\FF$ (or, more precisely, its projection $\pi(\FF)$).
The arrangement of these points resembles the Chebotarev compact set of minimal capacity for three points (which is depicted
after the change $z\mapsto 1/z$).
The extremal property of our compact set~$\FF$ is related to the point $\zz=\infty^{(0)}$,
which lies on the zero sheet of the Riemann surface $\RS(f)$. The zeros of the Hermite--Pad\'e polynomial of type~II
$q_{2n}$ (light blue points) simulate the second $S$-compact set $\mybfe$. Since the initial data are symmetric
about the real line, we have here $\pi(\mybfe)=[-1,1]$.
However, this is not so in the general case, as we shall see from Examples \ref{ex2} and~\ref{ex3}.
\end{example}

\begin{example}\label{ex2}
In this example, $f\in\sZ(\Delta)$ is such that $p=4$ in representation \eqref{701}, and besides,
all the pairwise distinct points $a_1,a_2,a_3,a_4$ lie in the upper half-plane.
The parameters $\alpha_1,\alpha_2,\alpha_3,\alpha_4$
are chosen so that the $S$-compact set~$\FF$ is a~continuum (connecting these four points).
In Fig.~\ref{fig_2},
 the dark (blue, red, and black) points are the zeros
of the Hermite--Pad\'e polynomials of type~I\enskip $Q_{n,0}$, $Q_{n,1}$ and $Q_{n,2}$,
which simulate the
compact set~$\FF$ (or, more precisely, its projection $\pi(\FF)$). As in the first example, the complement of~$\FF$
(which contains the point $\zz=\infty^{(0)}$) is the Stahl domain
$\mathfrak D_1$ on the Riemann surface $\RS(f)$. The compact set $\pi(\FF)$ is an analogue of the classical
Chebotarev continuum for 4~points. In addition to four branch points, it also contains two Chebotarev points
at which the equilibrium measure~$\lambda_F$
has zero density. The light blue points are the zeros
of the Hermite--Pad\'e polynomials of type~II $q_{2n}$, which simulate the second $S$-compact set~$\mybfe$.
In our case, $\pi(\mybfe)\neq \Delta$. More precisely,
$\pi(\mybfe)\cap \Delta=\{-1,1\}$. The complement of~$\mybfe$ on the Riemann surface~$\RS(f)$
(which contains the point $\zz=\infty^{(0)}$) is the Stahl domain $\mathfrak D_0$ (the zero sheet).
The open set $\mathfrak D_1\setminus\myo{\mathfrak D}_0$ is the first sheet (in our setting, this set is connected, and hence, is a~domain).
But this is not always the case\,---\,corresponding examples can be constructed for a~slightly more general
class of functions than the class $\sZ(\Delta)$ (see Example \ref{ex4} below).
\end{example}

\begin{example}\label{ex3}
In this example, $f\in\sZ(\Delta)$ is such that we again have
$p=4$ in representation \eqref{701}, the four points $a_1,a_2,a_3,a_4$ are such that there is again no symmetry about the real line.
Besides, $\alpha_j\in\{-1/2,1/2\}$
(the condition $\sum_{j=1}^4\alpha_j=0$ is retained). So, $f$ has branch points of order~2 only.
Geometrically, the four points
$a_j$ were chosen so that the projection $\pi(\FF)$ of the $S$-compact set~$\FF$ consists of two disjoint arcs.
In Fig.~\ref{fig_3},
 the dark (blue, red, and black) points are the zeros of the Hermite--Pad\'e polynomials of type~I $Q_{n,0}$, $Q_{n,1}$ and $Q_{n,2}$,
 which simulate the projection $\pi(\FF)$ of the compact set~$\FF$.
The complement of~$\FF$ (which contains the point $\zz=\infty^{(0)}$) is the Stahl domain
$\mathfrak D_1$ on the Riemann surface $\RS(f)$.
The light blue points represent the zeros of the Hermite--Pad\'e polynomials of type~I, which simulate the second $S$-compact
set~$\mybfe$. In the case under consideration, $\pi(\mybfe)\neq \Delta$. More precisely,
$\pi(\mybfe)\cap \Delta=\{-1,1\}$. The complement of~$\mybfe$ on the Riemann surface~$\RS(f)$ (which contains the
point $\zz=\infty^{(0)}$) is the Stahl domain $\mathfrak D_0$ (the zero sheet). The open
set $\mathfrak D_1\setminus\myo{\mathfrak D}_0$ is the first sheet (in our case, this set is connected, and hence, a~domain).
\end{example}

\begin{example}\label{ex4}
In this example, a more general class of multivalued analytic functions than the class $\sZ(\Delta)$ is considered. Namely,
instead of one closed interval $\Delta=[-1,1]$, we consider two
disjoint real closed intervals $\Delta_1$
and $\Delta_2$ and the corresponding two inverses of the Joukowsky function
$\pfi_{\Delta_1}(z)$ and $\pfi_{\Delta_2}(z)$. In this case, the multivalued function~$f$
has the form
\begin{equation}
f(z):=\prod_{j=1}^p\(A_j-\frac1{\pfi_{\Delta_1}(z)}\)^{\alpha_j}
\prod_{k=1}^q\(B_k-\frac1{\pfi_{\Delta_2}(z)}\)^{\beta_k},
\label{715}
\end{equation}
where, as before, it is assumed that the quantities $A_j$ are pairwise disjoint,
$|A_j|>1$, and the corresponding quantities $a_j$, $\pfi_{\Delta_1}(a_j)=A_j$, are such that
$a_j\notin(\Delta_1\cup\Delta_2)$. Similar assumptions are made about the quantities $B_k$.
Moreover, it is assumed that $a_j\neq
b_k$ for all~$j$ and that the points $a_j$, $b_k$ and the exponents $\alpha_j$, $\beta_k$ are such that
geometrically the problem is symmetric about the real line. In this case, the
results of~\cite{RaSu13} can be applied
(the results of~\cite{Sue19} are inapplicable, because this paper deals only with the class $\sZ(\Delta)$.
Under these conditions, the $S$-compact set~$\FF$ should be such that the
compact set $\pi(\FF)$ is symmetric about the real line, while the
second $S$-compact set~$\mybfe$ should be such that $\pi(\mybfe)=\Delta_1\cup \Delta_2$.
This is fully confirmed by numerical results. In Fig.~\ref{fig_4},
the dark (blue, red, and black) points are the zeros of the Hermite--Pad\'e polynomials of type~I $Q_{n,0}$,
$Q_{n,1}$ and $Q_{n,2}$, which simulate the compact set $\pi(\FF)$.
The complement of~$\FF$ (which contains the point $\zz=\infty^{(0)}$) is the Stahl domain
$\mathfrak D_1$ on the Riemann surface $\RS(f)$. The light blue points are the zeros
of the Hermite--Pad\'e polynomials of type~II $q_{2n}$, which simulate the compact set
$\pi(\mybfe)$.
In this case, $\pi(\mybfe)=\Delta_1\cup \Delta_2$.
The complement of~$\mybfe$ on the Riemann surface $\RS(f)$ (which contains the
point $\zz=\infty^{(0)}$) is the Stahl domain $\mathfrak D_0$ (the zero sheet).
The open set $\mathfrak D_1\setminus\myo{\mathfrak D}_0$ is the first sheet (in this case, this set is
disconnected, and hence is not a~domain). The open set
$\myo\CC\setminus\pi(\FF)$ consists of two domains, and hence, is not a~domain.
Numerical calculations have been performed with
$p=2$, $A_1=\myo{A_2}\notin\RR$ and $q=3$, $B_1=\myo{B}_2\notin\RR$, $B_3\in\RR$ in~\eqref{715}.
The number of Chebotarev points of zero density is five.
\end{example}

It should be noted that in all above Examples~\ref{ex1}--\ref{ex4} the compact set $\pi(\FF)$ is disjoint from
the compact set $\pi(\mybfe)$ and from the closed interval~$\Delta$. This is a~certain heuristic principle,
which always holds in the function class $\sZ(\Delta)$ (and even in the more general function class
of the form~\eqref{715}).

\section{Appendix}\label{s4}

\begin{lemma}\label{lem2}
Let $\mu\in M_1(F)$ and let
$$
v(z)=v(z;\mu):=\int\log\bigl|1-\pfi(z)\pfi(t)\bigr|\,d\mu(t).
$$
Then
\begin{equation}
v(z;\mu)=-\frac12U^{\beta_E(\mu)+\tau_E}(z)+\const,
\label{88.2}
\end{equation}
where $\beta_E(\mu)$ is the balayage of the measure~$\mu$ from the domain~$D$ to~$E$ and $\tau_E$
is the Chebyshev measure of the interval~$E$.
\end{lemma}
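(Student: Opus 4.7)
The plan is to compute $v(z;\mu)$ directly from the factorization identity~\eqref{82.2}, verify~\eqref{88.2} on the compact set $E$ by a boundary-value calculation, and then extend it throughout $D=\myh\CC\setminus E$ by harmonicity and the maximum principle.

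Taking absolute values and logarithms in~\eqref{82.2} and integrating against $d\mu(t)$ yields
\[
v(z;\mu) = -U^\mu(z) - I(z;\mu) + \log|\pfi(z)| + c_0,
\]
where $I(z;\mu):=\int_F\log|\pfi(z)-\pfi(t)|\,d\mu(t)$ and $c_0:=\log 2+\int_F\log|\pfi(t)|\,d\mu(t)$ depends only on~$\mu$. I then evaluate $v$ on $E$: for $x\in(-1,1)$ one has $\pfi(x+i0)=e^{i\theta}$ with $x=\cos\theta$, while $\pfi(t)\in\RR$ for $t\in F\subset\RR\setminus E$, and the elementary identity $|1-e^{i\theta}\alpha|=|e^{i\theta}-\alpha|$ for real $\alpha$ shows that $v(x;\mu)=I(x;\mu)$ on $E$. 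Combined with the display above and with $\log|\pfi(x)|=0$ on $E$, this yields $v(x;\mu)=-\tfrac12 U^\mu(x)+\tfrac12 c_0$ for $x\in E$.

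To identify the right-hand side of~\eqref{88.2} on $E$, I invoke the classical balayage relation for the unbounded domain $D$: since $g_D(\zeta,\infty)=\log|\pfi(\zeta)|$, one has $U^{\beta_E(\mu)}(x)=U^\mu(x)+\int_F\log|\pfi(\zeta)|\,d\mu(\zeta)$ for $x\in E$. Together with the well-known equilibrium relation $U^{\tau_E}(x)\equiv\log 2$ on $E$, this gives $-\tfrac12 U^{\beta_E(\mu)+\tau_E}(x)=-\tfrac12 U^\mu(x)+\const$ on $E$, matching the expression obtained for $v(x;\mu)$ up to an additive constant.

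Finally, to promote this boundary identity to an identity throughout $D$, I will argue by harmonicity. Both $v(z;\mu)$ and $-\tfrac12 U^{\beta_E(\mu)+\tau_E}(z)$ are harmonic in $D$: the integrand of $v$ has no zero in $D$ since $|\pfi(z)|>1>|1/\pfi(t)|$ for $z\in D$, $t\in F$, while $\beta_E(\mu)$ and $\tau_E$ are supported in $E$. A direct expansion at infinity using $\pfi(z)\sim 2z$ shows that both functions behave as $\log|z|+O(1)$, so their difference extends to a bounded harmonic function on $D$ that is constant on the regular compact set $E$; by the maximum principle this difference is identically constant, which is~\eqref{88.2}. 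The only non-routine input is the classical balayage identity on $E$; the remainder is essentially bookkeeping.
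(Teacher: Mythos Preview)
Your argument is correct and takes a genuinely different route from the paper's. The paper works distributionally: from the same factorization identity~\eqref{82.2} it first recognizes that the Green function of $D$ can be written as
\[
g_E(z,t)=\log\frac{|1-\pfi(z)\pfi(t)|^2}{2\,|z-t|\,|\pfi(z)\pfi(t)|},
\]
integrates against $d\mu$ to obtain $G_E^\mu(z)=2v(z;\mu)+U^\mu(z)+U^{\tau_E}(z)+\const$, and then applies $\dd^c$ once, using $\frac1{2\pi}\dd^c G_E^\mu=\beta_E(\mu)-\mu$, to read off the Riesz measure of $v$ directly. You instead evaluate both sides on the boundary $E$ via the elementary identity $|1-e^{i\theta}\alpha|=|e^{i\theta}-\alpha|$ for real $\alpha$ (which is precisely the boundary trace of the Green-function identity above), combine this with the classical balayage relation $U^{\beta_E(\mu)}=U^\mu+\int g_D(\cdot,\infty)\,d\mu$ on $E$, and then promote the boundary equality to all of $D$ by matching the $\log|z|$ growth at infinity and invoking the maximum principle. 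The paper's path is a little shorter and sidesteps the separate treatment of $\infty$; your path stays entirely within classical potential theory (no $\dd^c$) and makes the role of the balayage constant explicit. One small point worth tightening in your write-up: to apply the maximum principle you implicitly use that $U^{\beta_E(\mu)+\tau_E}$ is continuous on $E$, which follows (via the continuity principle) from the boundary identity $U^{\beta_E(\mu)+\tau_E}(x)=U^\mu(x)+\const$ you already established, since $U^\mu$ is continuous on $E\subset\CC\setminus F$.
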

\begin{proof}
Indeed, since the compact sets $E$ and~$F$ lie on the real line~$\RR$ and since the function $\pfi(z)$
assumes real values for $z\in\RR\setminus E$, we have, for the Green function $g_E(z,t)$ of the domain~$D$,
\begin{equation}
g_E(z,t)
=\log\frac{|1-\pfi(z)\myo{\pfi(t)}|}{|\pfi(z)-\pfi(t)|}
=\log\frac{|1-\pfi(z)\pfi(t)|}{|\pfi(z)-\pfi(t)|}
\label{151}
\end{equation}
for $z,t\in\RR\setminus E$.
Let us now employ the identity
\begin{equation*}
z-a\equiv -\frac{\bigl(\Phi(\zz)-\Phi(\maa)\bigr)
\bigl(1-\Phi(\zz)\Phi(\maa)\bigr)}
{2\Phi(\zz)\Phi(\maa)},
\quad z,a\in D,
\end{equation*}
which can be easily verified (see \cite{GoSu04},~\cite{Sue18}).
Using this relation, representation~\eqref{151}, and the relation between the functions
$\Phi(\zz)$ and $\pfi(z)$, we finally get
\begin{equation}
g_E(z,t)=\log\frac{|1-\pfi(z)\pfi(t)|^2}{2|z-t|\cdot|\pfi(z)\pfi(t)|}.
\label{152}
\end{equation}
By identity \eqref{152}, for the Green potential $G^\mu_E(z)$ of the measure $\mu\in M_1(F)$ we get
\begin{align}
G^\mu_E(z):&=\int g_E(z,t)\,d\mu(t)=\notag\\
&=2\int\log|1-\pfi(z)\pfi(t)|\,d\mu(t)+U^\mu(z)-\log|\pfi(z)|+\const
\notag\\
&=2v(z;\mu)+U^\mu(z)-\gamma_E+U^{\tau_E}(z)+\const.
\label{88.3}
\end{align}
Let us apply the operator $\dd^c$ to the function $v(z;\mu)$. We have (see \cite{Chi18})
$$
\frac1{2\pi}\dd^c G^\mu_E(z)=\beta_E(\mu)-\mu,
$$
and hence, using \eqref{88.3}, this establishes
$$
2\pi(\beta_E(\mu)-\mu)
=2\dd^c v(z;\mu)+2\pi(\delta_\infty-\mu)+2\pi(\delta_\infty-\tau_E).
$$
It follows that
$$
2\dd^c v(z;\mu)=2\pi(\beta_E(\mu)+\tau_E-2\delta_\infty).
$$
Therefore,
$$
v(z;\mu)=-\frac12 U^{\beta_E(\mu)+\tau_E}(z)+\const.
$$
Lemma~\ref{lem2} is proved.
\end{proof}

\clearpage
\newpage
\begin{figure}[!ht]
\centerline{
\includegraphics[width=12cm,height=12cm]{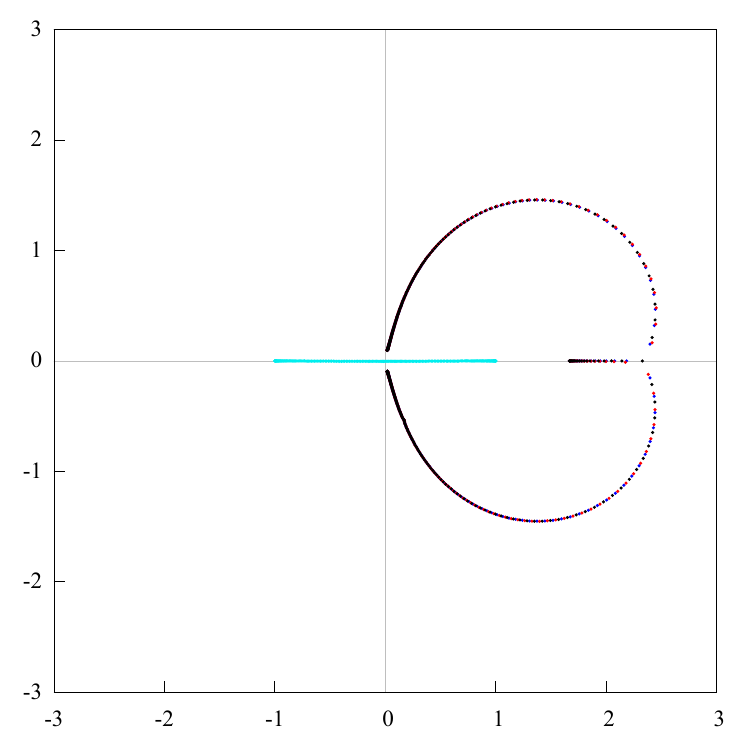}}
\vskip-6mm
\caption{
Here $p=3$, $A_1\in\RR$,
$A_2=\myo{A}_3\notin\RR$, $\alpha_1=-2/3$, $\alpha_2=\alpha_3=1/3$ in representation \eqref{701}.
The dark (blue, red, and black) points are the zeros of
the Hermite--Pad\'e polynomials of type~I\enskip
$Q_{350,0}$, $Q_{350,1}$ and $Q_{350,2}$, which simulate
the compact set $\pi(\FF)$.
The arrangement of these points resembles the Chebotarev compact set of minimal capacity for three points (which is depicted
after the change $z\mapsto 1/z$).
The extremal property of our compact set~$\FF$ is related to the point $\zz=\infty^{(0)}$,
which lies on the zero sheet of the Riemann surface $\RS(f)$. The zeros of the Hermite--Pad\'e polynomial of type~II
$q_{100}$ (light blue points) simulate the second compact set $\pi(\mybfe)$. Since the initial data are symmetric
about the real line, we have here $\pi(\mybfe)=[-1,1]$.
However, this is not so in the general case, as we shall see from Figs.~\ref{fig_2}
and~\ref{fig_3}.
}
\label{fig_1}
\end{figure}

\clearpage
\newpage
\begin{figure}[!ht]
\centerline{
\includegraphics[width=12cm,height=12cm]{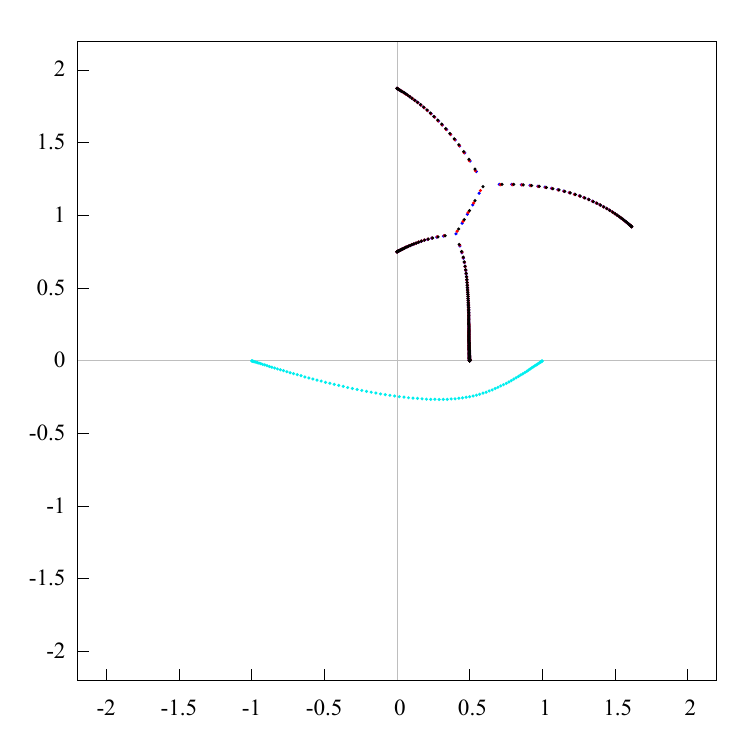}}
\vskip-6mm
\caption{
Here in \eqref{701} we take $p=4$.
All the four points $a_1,a_2,a_3,a_4$ lie in the upper half-plane,
$\alpha_1=\alpha_2=1/5$, $\alpha_3=\alpha_4=-1/5$.
The corresponding compact set~$\FF$ is a~continuum.
The dark (blue, red, and black) points are the zeros
of the Hermite--Pad\'e polynomials of type~I\enskip $Q_{200,0}$, $Q_{200,1}$ and $Q_{200,2}$.
These points simulate the compact set $\pi(\FF)$.
The compact set $\pi(\FF)$ is an analogue of the classical
Chebotarev continuum for 4~points.
In addition to four branch points, it also contains two Chebotarev points of zero density.
The light blue points are the zeros
of the Hermite--Pad\'e polynomials of type~II\enskip $q_{100}$, which simulate the second compact set
$\pi(\mybfe)$. In this case, $\pi(\mybfe)\cap \Delta=\{-1,1\}$.
}
\label{fig_2}
\end{figure}

\clearpage
\newpage
\begin{figure}[!ht]
\centerline{
\includegraphics[width=12cm,height=12cm]{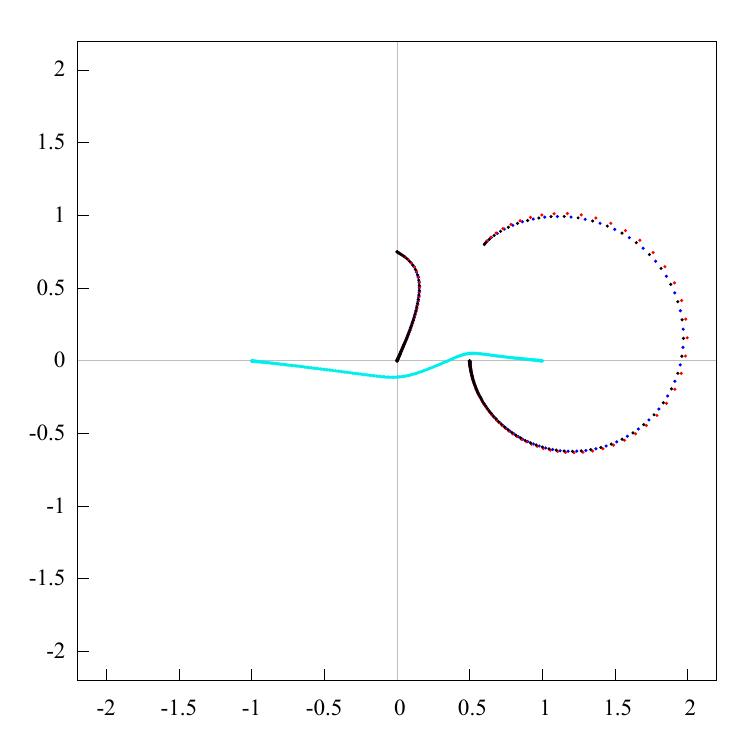}}
\vskip-6mm
\caption{
Here in \eqref{701} we take $p=4$. The four points $a_1,a_2,a_3,a_4$ are such that there is again no symmetry about the real line.
Besides, $\alpha_j\in\{-1/2,1/2\}$
(the condition $\sum_{j=1}^4\alpha_j=0$ is retained). So, $f$ has branch points of order~2 only.
Geometrically, the four points
$a_j$ were chosen so that the compact set $\pi(\FF)$ consists of two disjoint arcs.
The dark (blue, red, and black) points are the zeros of the Hermite--Pad\'e polynomials of type~I\enskip
$Q_{150,0}$, $Q_{150,1}$ and $Q_{150,2}$; these points simulate the
compact set $\pi(\FF)$.
The light blue points represent the zeros of the Hermite--Pad\'e polynomial of type~II\enskip $q_{200}$,
which simulate the compact set $\pi(\mybfe)$. In this case, $\pi(\mybfe)\cap
\Delta=\{-1,1\}$.
}
\label{fig_3}
\end{figure}

\clearpage
\newpage
\begin{figure}[!ht]
\centerline{
\includegraphics[width=12cm,height=12cm]{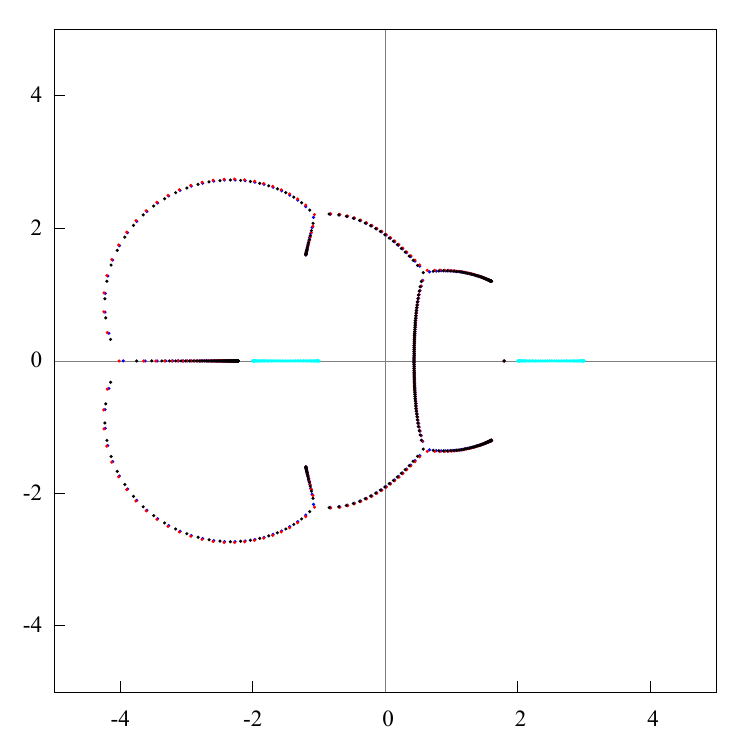}}
\vskip-6mm
\caption{
In this numerical example, the class of multivalued analytic functions defined by representation
\eqref{715} is considered. We consider two
disjoint real closed intervals $\Delta_1$
and $\Delta_2$ and the corresponding two inverses of the Joukowsky function
$\pfi_{\Delta_1}(z)$ and $\pfi_{\Delta_2}(z)$.
It is assumed that the points $a_j$, $b_k$ and the exponents $\alpha_j$, $\beta_k$
are such that
geometrically the problem is symmetric about the real line. In this case, the compact set
$\pi(\FF)$ should be symmetric about the real line, and the second compact set is $\pi(\mybfe)=\Delta_1\cup \Delta_2$.
This is fully confirmed by numerical results.
The dark (blue, red, and black) points are the zeros of the Hermite--Pad\'e polynomials of type~I
\enskip $Q_{350,0}$,
$Q_{350,1}$ and $Q_{350,2}$, which simulate the compact set $\pi(\FF)$.
The light blue points are the zeros
of the Hermite--Pad\'e polynomial of type~II $q_{100}$,
which simulate the compact set $\pi(\mybfe)$.
Numerical calculations have been performed with
$p=2$, $A_1=\myo{A_2}\notin\RR$ and $q=3$, $B_1=\myo{B}_2\notin\RR$, $B_3\in\RR$ in \eqref{715}.
The number of Chebotarev points of zero density is five;
this number agrees with that of the
branch points that differ from the points~$z=\pm1$.
}
\label{fig_4}
\end{figure}


\newpage

\end{document}